\def\XXint#1#2#3{{\setbox0=\hbox{$#1{#2#3}{\int}$ }
\vcenter{\hbox{$#2#3$ }}\kern-.6\wd0}}
\newcounter{assumption}
\theoremstyle{definition} 
\newtheorem{theorem}{Theorem}[section]                      
\newtheorem{corollary}[theorem]{Corollary} 
\newtheorem{example}[theorem]{Example}
\newtheorem{lemma}[theorem]{Lemma}
\newtheorem{definition}[theorem]{Definition}
\newenvironment{beweis}{\begin{proof}[Proof]}{\end{proof}}
\newtheorem{bemerkung}[theorem]{Remark}
\newcommand{\AlignFootnote}[1]{%
  \ifmeasuring@
  \else
    \iffirstchoice@
      \footnote{#1}%
    \fi
  \fi}
\DeclareMathOperator*{\fast}{-a.s.}
\newcommand{\dd}{\mathrm{d}}
\newcommand{\E}{\mathbb{E}}
\newcommand{\fastsicher}{\quad \W \fast}
\newcommand{\W}{\mathbb{P}}
\newcommand{\skalarq}[1]{ \langle #1 \rangle }
\DeclareMathOperator{\ccc}{C}
\newcommand*{\C}{\mathbb{C}}
\newcommand{\N}{\mathbb{N}}
\newcommand{\R}{\mathbb{R}}
\newcommand{\Rd}{\R^{d}}
\renewcommand{\phi}{\varphi}
\renewcommand{\epsilon}{\varepsilon}
\newcommand{\eps}{\varepsilon}
\DeclareMathOperator{\llll}{L}
\newcommand{\PP}{\mathcal{P}}
\newcommand{\abs}[1]{\left\vert #1 \right\vert}
\newcommand{\lp}[1]{\llll^{#1}}
\newcommand{\Pm}{\PP_2(M)}
\begin{document}

   \begin{center}
    \Large
    \textbf{Krylov-Veretennikov decomposition for measure-valued processes induced by SDEs with interaction on Riemannian manifolds}
        
        
    \vspace{0.4cm}
    Andrey Dorogovtsev\footnote{NAS Ukraine, andrey.dorogovtsev@gmail.com}\orcidlink{0000-0003-0385-7897}, Alexander Weiß\footnote{University of Leipzig, alexander.weiss@math.uni-leipzig.de}\orcidlink{0009-0006-2791-4912}

    \vspace{0.9cm}
\end{center}

 \begin{abstract}
     We introduce a framework for stochastic differential equations (SDEs) with interaction on compact, connected, $d$-dimensional manifolds. For SDEs whose drift and diffusion coefficients may depend on both the state variable and the empirical distribution, we establish existence and uniqueness of strong solutions under general regularity assumptions.
We study the associated measure-valued process on the Wasserstein space over the manifold, deriving an explicit Itô–Wiener decomposition. We prove Malliavin differentiability of the solution and, using directional derivatives in the Wasserstein space, establish smooth dependence of the solution on the measure component for a class of coefficients.
 \end{abstract}

\section{Introduction}
In this paper, we introduce the notion of stochastic differential equations with interactions on compact, connected, $d$-dimensional Riemannian manifolds $M$, as introduced by Dorogovtsev \cite{dorogovtsev1997smooth}.
These equations are of the following type in the case $M=\Rd$  
\begin{align*}
    \begin{cases}
    \dd x(u,t)&= a(x(u,t),\mu_t)\dd t + \int_{\Rd} b(x(u,t),\mu_t,p) W(\dd p, \dd t)\\
    x(u,0)&= u \in \Rd \\
    \mu_t&= \mu \circ x^{-1}(\cdot,t).
    \end{cases} 
\end{align*}
Here, $\mu$ is a probability measure on $\Rd$, and $W$ is a Wiener sheet on $\lp{2}(\Rd)$.We can interpret points in $\Rd$ as particles distributed in space according to $\mu$, and the flow $x$ as the temporal evolution of these particles. This evolution depends not only on individual particle dynamics but also on the evolution of their spatial distribution, reflecting particle interaction. The fact that equations with interaction are stochastic flows is advantageous for generalizing them to manifolds and simplifies their treatment. There has been done a lot of work in the realm of SDEs with interaction on $\Rd$, such as asymptotics and intermittency phenomena \cite{belozerova2020asymptotic,DorWeiss}, reflected SDEs with interaction \cite{chen2024exponential} or existence and uniqueness of the SPDE induced from the measure-valued process and its properties \cite{gess2022conservativespdesfluctuatingmean,gess2024stochastic}.

To our knowledge, the first and only consideration of SDEs with interactions on manifolds was conducted by Ding, Fang, and Li \cite{ding2023stochasticdifferentialequationsstochastic}, where the authors considered special interaction kernels in the drift but no interaction term in the noise.
We are interested in ergodicity results for the measure-valued process $(\mu_t)_{t \ge 0}$. While the Krylov–Bogoliubov theorem ensures the existence of an invariant measure on the space $\PP_2(M)$, ergodicity results cannot be established using classical techniques. This is due to the fact that the Markov process $(\mu_t)_{t \ge 0}$ takes values in an infinite-dimensional and non-linear space, while the driving noise is finite-dimensional. Hence, "smoothing" properties of the induced semigroup $P_t f(\mu) = \E(f(\mu_t))$ cannot be expected, due to the highly degenerate noise.

Therefore, we study the Krylov–Veretennikov decomposition of functionals $(F(\mu_t))_{t \ge 0}$, where $F: \PP_2(M) \to \R$. This provides a formula for the kernels of the Itô–Wiener decomposition, offering insight into the structure of functionals of $(\mu_t)_{t \ge 0}$.The explicit formula enables manual calculations of long-term behavior by analising individual terms of the decomposition separately. To our knowledge this is the first contribution towards a Krylov-Veretennikov decomposition for the measure valued process induced by SDEs with interaction. 

The Krylov-Veretennikov decomposition has been established in \cite{veretennikov1976explicit} by Krylov and Veretennikov. In their work they considered solutions to ordinary one-dimensional SDEs and described the Itô-Wiener kernels only through the semigroup induced by the SDE and a differential operator, they showed with the help of the decomposition existence of strong solutions, the same ansatz has been applied to SDEs with more general coefficients  \cite{krylov2000direct,krylov2025strong}. After the work of Krylov and Veretennikov, there have been more general considerations, in \cite{dorogovtsev2012krylov} Dorogovtsev showed a Krylov-Veretennikov type decomposition for general stochastic semigroups on Hilbert spaces. Moreover he showed the Krylov-Veretennikov decomposition for the Arratia flow. In \cite{riabov2015krylov} the Krylov-Veretennikov decomposition for a stopped Brownian motion has been proven and in \cite{glinyanaya2015krylov} the Krylov-Veretennikov for functionals of the Harris flow. To our knowledge this is the first paper treating the Krylov-Veretennikov decomposition result for the measure valued process induced by SDEs with interactions. 

The structure of the paper is as follows. In Chapter 2, we present a definition and a uniqueness and existence result for SDEs with interaction on general Riemannian manifolds. In Chapter $3$ we present differential operators on the Wasserstein space and discuss smoothness of the solution to SDEs with interaction with respect to these differential operators. 
In chapter 4 we present and prove our main result which in simplified form looks as follows

    \begin{theorem}
        Let $f$ be a "smooth" and bounded functional then 
        \begin{align*}
            f(\mu_t) =    T_tf(\mu) +\sum^n_{i=1}\sum^\infty_{k=1} \underset{\Delta^k([0,t])}{\int \dots \int} T_{\tau_1}A_i T_{\tau_2-\tau_1}\dots T_{\tau_k-\tau_{k-1}}A_iT_{t-\tau_k}f(\mu) \dd B^i(\tau_1)\dots \dd B^i(\tau_k)    \end{align*}
            where $A_i$ is a differential operator on the Wasserstein space more precisely a directional derivative into the direction of the noise vector fields and $T_tf(\mu)=\E(f(\mu_t))$.
    \end{theorem}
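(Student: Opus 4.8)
The plan is to derive the decomposition by iterating a first-order Duhamel identity produced by Itô's formula on the Wasserstein space. Write $\mathcal{L}$ for the generator of the measure-valued Markov process $(\mu_t)_{t\ge 0}$ on $\PP_2(M)$ and $A_i$ for the directional derivative along the $i$-th noise vector field, so that, by the Itô formula for $(\mu_t)$ established in Chapter~3, every admissible (``smooth and bounded'') functional $F$ satisfies
\begin{align*}
F(\mu_t)=F(\mu)+\int_0^t \mathcal{L}F(\mu_s)\,\dd s+\sum_{i=1}^{n}\int_0^t A_iF(\mu_s)\,\dd B^i(s).
\end{align*}
Because $T_tf(\mu)=\E(f(\mu_t))$ is exactly the semigroup generated by $\mathcal{L}$, one has $\partial_s T_{t-s}f=-\mathcal{L}T_{t-s}f$, and $T_{t-s}f$ is again admissible for every $s\in[0,t]$.

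First I would apply the Itô formula to the time-dependent functional $(s,\nu)\mapsto T_{t-s}f(\nu)$ evaluated along $\mu_s$. The drift then receives the contribution $-\mathcal{L}T_{t-s}f(\mu_s)$ from the $s$-derivative and $\mathcal{L}T_{t-s}f(\mu_s)$ from the generator of $\mu$, and the two cancel. Integrating over $[0,t]$ leaves the first-order identity
\begin{align*}
f(\mu_t)=T_tf(\mu)+\sum_{i=1}^{n}\int_0^t A_iT_{t-s}f(\mu_s)\,\dd B^i(s).\tag{$\star$}
\end{align*}
Here I must verify that $T_{t-s}f$ stays inside the class of functionals for which the Itô formula of Chapter~3 is valid, i.e.\ that the admissible class is stable under $T_t$ and is a core along which the Wasserstein chain rule holds.

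Next I iterate $(\star)$. For fixed $\tau_1$ the functional $A_iT_{t-\tau_1}f$ is again admissible, so $(\star)$ applied on $[0,\tau_1]$ to $g=A_iT_{t-\tau_1}f$ gives
\begin{align*}
A_iT_{t-\tau_1}f(\mu_{\tau_1})=T_{\tau_1}\!\big(A_iT_{t-\tau_1}f\big)(\mu)+\sum_{j=1}^{n}\int_0^{\tau_1}A_jT_{\tau_1-\tau_2}\big(A_iT_{t-\tau_1}f\big)(\mu_{\tau_2})\,\dd B^j(\tau_2).
\end{align*}
Substituting back into $(\star)$ and repeating $k$ times yields the partial sum of the asserted series up to order $k$ --- the split-off deterministic integrands being $T_{\tau_1}A_{i_1}T_{\tau_2-\tau_1}\cdots A_{i_m}T_{t-\tau_m}f(\mu)$ integrated over the simplices $\Delta^m([0,t])$, $m\le k$ --- together with a remainder $R_k$, namely a $k$-fold iterated stochastic integral over $\Delta^k([0,t])$ whose integrand is the composition $A_{i_k}T\cdots A_{i_1}Tf$ evaluated at $\mu_{\tau_k}$. (In the simplified statement the noise indices are written uniformly as $i$; in general one sums over all multi-indices $(i_1,\dots,i_k)\in\{1,\dots,n\}^k$, and the displayed formula is the diagonal shorthand.)

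It then remains to prove $R_k\to 0$ in $L^2$, and this is the step I expect to be the main obstacle. By orthogonality of multiple stochastic integrals and the Itô isometry,
\begin{align*}
\E|R_k|^2\le\sum_{(i_1,\dots,i_k)}\int_{\Delta^k([0,t])}\E\big|A_{i_k}T\cdots A_{i_1}Tf(\mu_{\tau_k})\big|^2\,\dd\tau_1\cdots\dd\tau_k\le (nC^2)^k\,\norm{f}^2\,\frac{t^k}{k!},
\end{align*}
which tends to $0$ and at the same time gives absolute $L^2$-convergence of the series --- provided one has the uniform bound $\norm{A_iT_sg}\le C\norm{g}$ for $s\in[0,t]$, together with $\norma{\cdot}{\infty}\le\norm{\cdot}$, where $\norm{\cdot}$ is the norm of the admissible class. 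Securing this bound is the crux: since the noise $B^1,\dots,B^n$ is finite-dimensional and the dynamics are highly degenerate, no genuine smoothing of $T_t$ is available, so one cannot estimate $A_iT_s$ through a Bismut--Elworthy--Li type inequality; instead one must work on a Banach space of functionals on $\PP_2(M)$ --- for instance cylinder functionals of finitely many smooth moments, normed through finitely many Lions derivatives --- on which $T_t$ acts boundedly and each $A_i$ is a bounded operator realizing the above composition estimate, and I would expect this to be precisely the admissible class fixed in Chapter~3. Granting it, the partial sums converge in $L^2$ to $f(\mu_t)-T_tf(\mu)$, which is the claim; as a consistency check one verifies that the kernels so obtained coincide with those of the Itô--Wiener decomposition of $(f(\mu_t))_{t\ge0}$, via the same recursion.
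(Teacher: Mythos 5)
Your route is genuinely different from the paper's: you iterate a first-order Duhamel identity for $T_{t-s}f(\mu_s)$ and try to kill the remainder in $L^2$, whereas the paper starts from the Itô--Wiener expansion of $f(\mu_t)$ (which converges in $L^2$ a priori, since $f$ is bounded) and then identifies its kernels $a^{t,i}_k$ by induction, using the flow/Markov property $f(\mu_{t+s})=f(\Theta_t(\mu_0^s)(\mu_t))$ together with the limit identification $\lim_{T\searrow 0}T^{-1}\E\big(T_{t-\tau_1}f(\mu_T)B^i_T\big)=A_iT_{t-\tau_1}f(\mu)$, following Dorogovtsev's scheme for stochastic semigroups. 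This is why the paper only needs qualitative hypotheses (smoothness of $T_tf$, time-continuity of the kernels) and never has to prove any convergence of the series.

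The genuine gap in your proposal is precisely the step you flag as the crux and then grant: the uniform bound $\norm{A_iT_sg}\le C\norm{g}$ on a single Banach space of functionals on $\PP_2(M)$ that dominates the sup-norm and is stable under both $T_s$ and every $A_i$, so that $k$-fold compositions are bounded by $C^k$. Nothing of this kind is available in the present setting, and the paper proves nothing resembling it: the noise is finite-dimensional, $T_t$ has no smoothing, and each application of $A_i=D_I^{V_i(\cdot,\mu)}$ consumes one intrinsic derivative, so a norm built from finitely many intrinsic/Lions derivatives is not mapped into itself by $A_iT_s$ with a constant uniform in $s$ and in the number of iterations. Without that bound, both your remainder estimate $\E\abs{R_k}^2\le (nC^2)^k\norm{f}^2 t^k/k!$ and the absolute $L^2$-convergence of the series are unsupported, and this is exactly the difficulty the paper's kernel-identification argument is designed to avoid (convergence comes for free from the Itô--Wiener decomposition of the square-integrable variable $f(\mu_t)$; only the form of the kernels has to be established). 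Two smaller points: the Itô formula proved in the paper (Lemma on the measure-valued Itô formula) is first order and in Stratonovich form, so your identity $(\star)$ with the drift cancellation additionally requires second-order intrinsic differentiability of $T_{t-s}f$ and an Itô-form generator, which has to be argued; and your observation about mixed multi-indices $(i_1,\dots,i_k)$ versus the diagonal form displayed in the statement is fair, since the iteration naturally produces all mixed terms, while the paper works throughout with the diagonal notation $a^{t,i}_k$.
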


   The form of our Krylov-Veretennikov expansion is very close to the original in  \cite{veretennikov1976explicit}the authors obtained for $T_tf(u)= \E(f(y(u,t)))$ and $y(u,t)$ solves a one dimensional SDE
   \begin{align*}
       \begin{cases}
           dy(u,t)&=\alpha(y(u,t))\dd t + \beta(y(u,t))\dd B_t\\
           x(u,0)&= u\in \R
       \end{cases} 
   \end{align*}
where $\alpha,\beta:\R\to \R$. Their result was
   \begin{align*}
       f(x(u,t))= E(f(x(u,t))) +\sum^\infty_{k=0} \underset{\Delta([0,t]^k)}{\int \dots \int} T_{t-\tau_k} \beta(u)\frac{\partial}{\partial u} T_{\tau_k-\tau_{k-1}} \dots \beta(u)\frac{\partial}{\partial u}T_{\tau_1} f(u)\dd B(\tau_1) \dots \dd B_{\tau_k}.
   \end{align*}
One can observe that the decomposition here only depends on the semigroup and directional derivative operator $b\frac{\partial}{\partial u}$ where $b$ is the noise coefficient of the SDE describing $x(u,t)$ similar to our main result.

\section{Existence and Uniqueness of SDEs with interaction on compact Riemannian manifolds}
Let $M$ be a compact smooth connected $d$-dimensional Riemannian manifold without boundary or $M=\Rd$. Let $\PP_2(M)$ be the space of all probability measures with finite second moment, note that in the compact case $\PP_2(M)$ is the space of all probability measures. Furthermore define the Wasserstein-$2$-distance $\gamma_2$ by 
\begin{align*}
    \gamma_2(\mu,\nu):= \left(\inf_{\kappa \in \C(\mu,\nu)} \underset{M\times M}{\int\int} d^2_M(u,v) \kappa(\dd u,\dd v)\right)^{\frac{1}{2}},
\end{align*}
where $C(\mu,\nu):= \{\rho \in M\times M : \rho(\cdot\times M)= \mu; \rho(M\times \cdot)=\nu\}$ is the space of all coupling over $\mu$ and $\nu$ and $d_M$ is the geodesic distance on $M$.
Let $(V_0,\dots,V_n)$ be mappings such that $V_i:M\times \PP_2(M)\to TM$ for all $i=0,\dots,n$, such that $V_i(\cdot,\mu)$ is a smooth vector field for all $\mu\in \PP_2(M)$ and $V_i(x,\cdot):\PP_2(M)\to T_xM$ and its derivative are Lipschitz map with respect to the Wasserstein-$2$ distance $\gamma_2$ on $\PP_2(M)$ for all $x\in M$, moreover the partial derivatives of $V_i$ suffice the same properties.

We consider equations of the following form 
\begin{align}\label{Wechselwirkung}
    \begin{cases}
        dx_\mu(u,t)&= F(x_\mu(u,t),\circ \dd t)\\
        x_\mu(u,0)&= u\in M \\
        F(u,t)&= \int_0^t V_0(u,\mu_t) \dd t + \sum^n_{i=1}\int_0^t V_i(u,\mu_t) \dd B^i_t \\
        \mu_t &= \mu \circ x_\mu(\cdot,t)^{-1}
    \end{cases}
\end{align}
An $M$-valued process $x$ solves the system \eqref{Wechselwirkung} if for all $f\in \ccc^\infty(M)$ 
\begin{align*}
    f(x_\mu(u,t))&= f(u)+\int_0^t F(x(u,t),\circ \dd t)f \\
    &=f(u)+ \int \Tilde{V}_0(\cdot,\mu_t)f(x_\mu(u,s)) \dd s + \sum^n_{i=1}\int_0^t V_i(\cdot,\mu_s)f(x_\mu(u,s)) \dd B_s  
\end{align*}
where 
\begin{align*}
    \Tilde{V}_0(\cdot,\mu)f= V_0(\cdot,\mu)f +\frac{1}{2} \sum_{i=1}^n V^2_i(\cdot,\mu)f 
\end{align*}
in local coordinates $(x^1,\dots,x^d)$.

\begin{lemma}\label{Bilipschitzlemma}
    Let $M$ be a connected $d$-dimensional smooth Riemannian manifold without boundary and let $\iota: M \to \R^N$ be the Nash embedding then we have for all $x,y\in M$ and $C>0$
    \begin{align}\label{Einbettungbilip}
        \abs{\iota(x)-\iota(y)} \le d_M(x,y) \le C \abs{\iota(x)-\iota(y)}
    \end{align}

\end{lemma}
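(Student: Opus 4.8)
The plan is to treat the two inequalities separately: the left one is immediate from the fact that $\iota$ is an isometric (Nash) embedding, while the right one is where the real work lies and is the place where compactness of $M$ (which is in force throughout the paper; for $M=\R^d$ the claim is trivial with $\iota=\mathrm{id}$ and $C=1$) is used. First, for the left inequality: since $\iota^\ast g_{\mathrm{eucl}} = g_M$, every smooth curve $\gamma\colon[0,1]\to M$ from $x$ to $y$ has Euclidean length in $\R^N$ equal to its Riemannian length in $M$. As the straight segment from $\iota(x)$ to $\iota(y)$ is the shortest path between these points in $\R^N$, we get $\abs{\iota(x)-\iota(y)}\le \mathrm{length}(\iota\circ\gamma)=\mathrm{length}(\gamma)$, and taking the infimum over $\gamma$ yields $\abs{\iota(x)-\iota(y)}\le d_M(x,y)$.

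For the right inequality I would show that $\Phi(x,y):=d_M(x,y)/\abs{\iota(x)-\iota(y)}$, defined and continuous on $(M\times M)\setminus\Delta$ (well-defined since $\iota$ is injective), is bounded, by splitting $M\times M$ into a neighborhood of the diagonal $\Delta$ and its complement. Away from the diagonal: for fixed $\eps>0$ the set $K_\eps=\{(x,y):d_M(x,y)\ge\eps\}$ is compact, the continuous function $(x,y)\mapsto\abs{\iota(x)-\iota(y)}$ is strictly positive on $K_\eps$ hence bounded below by some $\delta>0$, and $d_M\le\mathrm{diam}(M)<\infty$; therefore $\Phi\le\mathrm{diam}(M)/\delta$ on $K_\eps$. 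Near the diagonal: using geodesic normal coordinates (legitimate since the injectivity radius of a compact $M$ is bounded below), write $y=\exp_x(v)$ with $\abs{v}_{g_M}=d_M(x,y)$ small; a first-order Taylor expansion of $\iota\circ\exp_x$ around $v=0$ gives $\iota(y)=\iota(x)+\dd\iota_x(v)+R(x,v)$ with $\abs{R(x,v)}\le C_0\abs{v}^2$, the bound $C_0$ being uniform over $x\in M$ by compactness. Since $\dd\iota_x$ is a linear isometry onto its image, $\abs{\dd\iota_x(v)}=\abs{v}_{g_M}=d_M(x,y)$, so $\abs{\iota(x)-\iota(y)}\ge d_M(x,y)-C_0\,d_M(x,y)^2\ge\tfrac12 d_M(x,y)$ whenever $d_M(x,y)\le\eps:=\min\{i_0/2,\,1/(2C_0)\}$, where $i_0$ is the injectivity radius. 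Hence $\Phi\le 2$ on a neighborhood of $\Delta$, and combining the two regions gives $\Phi\le C:=\max\{2,\ \mathrm{diam}(M)/\delta\}$ everywhere, which is the asserted right-hand inequality.

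The main obstacle is exactly the uniformity of the near-diagonal estimate in the base point $x$, i.e.\ that the remainder in the Taylor expansion of $\iota\circ\exp_x$ is controlled by $C_0\abs{v}^2$ with $C_0$ independent of $x$. On a compact manifold this follows from uniform control of the geometry — injectivity radius bounded below, curvature and all derivatives of the smooth map $\iota$ bounded — so that the second derivatives of $(x,v)\mapsto\iota(\exp_x v)$ are uniformly bounded on $\{\abs{v}\le i_0/2\}$; but this is the one step that genuinely uses compactness and deserves to be made explicit, while the remaining parts of the argument are soft.
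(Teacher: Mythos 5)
Your proof is correct, and it reaches the conclusion by a somewhat different route than the paper, although both arguments share the same near-diagonal/off-diagonal split and both rest on compactness (which, as you rightly note, is the standing assumption even though the lemma's statement omits it; for $M=\R^d$ the claim is vacuous). Off the diagonal, the paper argues by contradiction with sequences $y_k,z_k$ satisfying $d_M(y_k,z_k)\ge k\abs{\iota(y_k)-\iota(z_k)}$ and extracts convergent subsequences, whereas you minimize the continuous positive function $(x,y)\mapsto\abs{\iota(x)-\iota(y)}$ on the compact set $\{d_M\ge\eps\}$ and bound $d_M$ by the diameter; this is the same compactness input packaged more directly and with an explicit constant. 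Near the diagonal the mechanisms genuinely differ: the paper works in finitely many submanifold charts, joins nearby points by straight segments in chart coordinates, and compares the intrinsic distance of the image $d_{\iota(M)}$ with the chart-coordinate and Euclidean distances, with the constant coming from bounds on $\nabla\phi$ over the chart closures; you instead Taylor-expand $\iota\circ\exp_x$ in geodesic normal coordinates, use that $d\iota_x$ is a linear isometry (the only place the isometric, as opposed to merely smooth and injective, nature of the Nash embedding enters the second inequality) to get $\abs{\iota(x)-\iota(y)}\ge d_M(x,y)-C_0 d_M(x,y)^2\ge\tfrac12 d_M(x,y)$ for $d_M(x,y)$ below an explicit threshold. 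Your version buys a clean quantitative local constant ($2$) and avoids the chart bookkeeping, at the price of invoking a positive lower bound on the injectivity radius and uniform bounds on the second derivatives of $(x,v)\mapsto\iota(\exp_x v)$; as you say, that uniformity is exactly where compactness is used and is correctly justified by smoothness of $\exp$ on the compact set $\{\abs{v}\le i_0/2\}\subset TM$. The paper's chart-based argument needs less Riemannian machinery (no exponential map or injectivity radius) but obtains the local estimate less explicitly. Both are valid; no gap in yours.
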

\begin{proof}
    It suffices, by isometry to show 
    \begin{align*}
        \abs{\iota(x)-\iota(y)}\le d_{\iota(M)}(\iota(x),\iota(y))\le C\abs{\iota(x)-\iota(y)}
    \end{align*}
    since the first inequality is trivial we shall only show the second. Let $x\in M$ Choose a submanifold  chart $(U_x,\phi)$ where $U_x$ is an open $\R^N$ neighbourhood around $\iota(x)$ such that $\phi(U)$ is a ball and $\phi_i^{d+1}(\iota(y))=\dots=\phi_i^N(\iota(y))=0$ for $y\in U_x\cap \iota(M)$, moreover by shrinking $U_x$ if necessary we can assume w.l.o.g that $\phi\in \ccc^\infty_b(U_x)$. Now let $ \iota(y),\iota(z) \in U_x\cap M$ .Let $\gamma(t)= \phi^{-1}_i(\phi(\iota(z))+t(\phi(\iota(y))-\phi(\iota(z))))$ which can be chosen since $\phi(U)$ is a ball. Now $\gamma(0)=\iota(x)$ and $\gamma(1)=\iota(y)$ we get $\gamma^\prime \equiv \phi(\iota(x))-\phi(\iota(y))$ in local coordinates.  
    
    We denote by $U^\eps_x$ a $\R^N$-open neighbourhood of $\iota(x)$ such that $U^\eps_x\cap M= B_{\iota(M)}(\iota(x),\eps)$ for all $\eps>0$.  Choose $\delta>0$ such that $\overline{B_{\R^N}(\iota(x),\delta)}\subset U_x$. Now let $\eps>0$ be small enough such that $B_{\iota(M)}(\iota(x),\eps)\subset B_{\R^N}(\iota(x),\delta)\cap \iota(M)$ we can choose now $U^{\frac{\eps}{2}}_x\subset U^\eps_x$  such that $U^{\eps}_x \subset  B_{\R^N}(\iota(x),\delta)$ then we have for all $y,z\in B_M(x,\eps)$
    \begin{align*}
        d_{\iota(M)}(\iota(y),\iota(z)) \le C \abs{\phi(\iota(z))-\phi(\iota(y))}. 
    \end{align*}
    where $C>0$ depends solely depends on the Riemannian metric and the chart $\phi$. Moreover 
    \begin{align*}
        d_{\iota(M)}(\iota(z),\iota(y))\le \abs{\phi(\iota(y))-\phi(\iota(z)) }\le \sup_{x\in \overline{B_{\R^N}(\iota(x),\delta)}}  \abs{\nabla \phi(x)} \abs{\iota(y)-\iota(z)}. 
    \end{align*} 
 Now cover $M$ with such $B_M(x_i,\frac{\eps_i}{2})_{i=1,\dots,m}$ as constructed above, then $B_M(x_i,\eps_i)$ is also a cover of $M$ on which we have the same coordinate chart $(\phi_i)_{i=1,\dots, n}$ as constructed. Then we can choose by construction a universal constant $C>0$ such that, whenever $y,z\in B_M(x_i,\eps_i)$ for some $i=1,\dots,n$ holds, we have 
 \begin{align}\label{AbschGeodist}
     d_{\iota(M)}(\iota(y),\iota(z))\le C \abs{\iota(y)-\iota(z)}_{\R^N}.
 \end{align}
Set $\eps = \min_{i=1,\dots,n} \eps_i $, hence for all $y,z \in M$ such that $d_M(y,z)<\frac{\eps}{2}$, we get \eqref{AbschGeodist}. To observe this, note that there exists $j\in\{1,\dots,n\}$ such that $y\in B_M(x_j, \frac{\eps_j}{2})$, hence
\begin{align*}
    d(z,x_j)\le d(y,x_j) + d(z,y)\le \frac{\eps_j}{2} +\frac{\eps}{2}\le \eps_j.
\end{align*}
This implies $y,z \in B_M(x_j,\eps_j)$. 
Assume now, that \eqref{Einbettungbilip} does not hold for $y,z\in M$ such that $d_M(y,z)\ge \frac{\eps}{2}$. Then we have sequences $y_k,z_k \in M$ for $k\in \N$ such that 
\begin{align*}
    d_M(y_k,z_k) \ge k\abs{\iota(y_k)-\iota(z_k)} 
\end{align*}
 for all $k\in \N$. By compactness we can assume w.l.o.g that $y_k \to y $ and $z_k \to z $ where $y,z\in M$ such that $y\neq z$ since $d_M(y_K,z_k)\ge \frac{\eps}{2}$. Since $\iota:M\to \R^N$ is continuos we get a contradiction. Since 
 \begin{align*}
     d_M(y,z)= \infty 
 \end{align*}
 is impossible.
 Therefore \eqref{Einbettungbilip} holds for all $y,z\in M$.  
 \end{proof}
\begin{theorem}\label{wohlgestellt}
    The solution to $\eqref{Wechselwirkung}$ exists and is unique.     
\end{theorem}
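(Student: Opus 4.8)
The plan is to use the Nash embedding $\iota\colon M\to\R^N$ from Lemma~\ref{Bilipschitzlemma} to recast \eqref{Wechselwirkung} as an It\^o SDE with interaction in the ambient Euclidean space and then to run a Picard iteration in an $L^2$-space of flows, using the bi-Lipschitz bound to close the self-consistency loop created by the measure argument. First I would extend each pushforward vector field $d\iota\circ V_i(\cdot,\mu)$ from $\iota(M)$ to a smooth, compactly supported vector field $\widehat V_i(\cdot,\mu)$ on $\R^N$ which is tangent to $\iota(M)$ along $\iota(M)$ and inherits the stated regularity in $\mu$ (with constants uniform in $\mu$, since $M$ is compact, $\mathcal P(M)=\mathcal P_2(M)$ is $W_2$-compact, and the tubular-neighbourhood extension preserves smoothness in the space variable and the Lipschitz bounds in $\mu$). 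Because the Stratonovich differential obeys the ordinary chain rule, the Euclidean Stratonovich flow driven by $\widehat V_0,\dots,\widehat V_n$ and started on $\iota(M)$ remains on $\iota(M)$; its $\iota^{-1}$-image is then the desired $M$-valued flow, and conversely any solution of \eqref{Wechselwirkung} embeds to such a Euclidean solution. Passing to It\^o form introduces the drift correction $\tfrac12\sum_i (D\widehat V_i)\widehat V_i$, which, by the hypothesis that the partial derivatives of the $V_i$ enjoy the same regularity, is again bounded and Lipschitz in both arguments.

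Next I would work in the Hilbert space $\mathcal H=L^2(M,\mu;\R^N)$, viewing the embedded flow $u\mapsto Y_t(u):=\iota(x_\mu(u,t))$ as a continuous, $\mathcal F_t$-adapted $\mathcal H$-valued process whose marginal is $\mu_t=\mu\circ(\iota^{-1}\circ Y_t)^{-1}$. Lemma~\ref{Bilipschitzlemma} then supplies the crucial one-sided estimate, obtained from the synchronous coupling through the initial point $u$,
\begin{align*}
    W_2\big(\mu\circ(\iota^{-1}Y_t)^{-1},\,\mu\circ(\iota^{-1}Y'_t)^{-1}\big)^2\le \int_M d_M\big(\iota^{-1}Y_t(u),\iota^{-1}Y'_t(u)\big)^2\,\mu(\dd u)\le C^2\,\|Y_t-Y'_t\|_{\mathcal H}^2 .
\end{align*}
On $[0,T]$ I would define the Picard scheme $Y^{(0)}_t(u)=\iota(u)$ and
\begin{align*}
    Y^{(k+1)}_t(u)=\iota(u)+\int_0^t \Big(\widehat V_0+\tfrac12\sum_i (D\widehat V_i)\widehat V_i\Big)\big(Y^{(k)}_s(u),\mu^{(k)}_s\big)\,\dd s+\sum_{i=1}^n\int_0^t \widehat V_i\big(Y^{(k)}_s(u),\mu^{(k)}_s\big)\,\dd B^i_s ,
\end{align*}
with $\mu^{(k)}_s=\mu\circ(\iota^{-1}Y^{(k)}_s)^{-1}$ (an $\mathcal F_s$-measurable random element of $\mathcal P_2(M)$). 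Integrating over $u$ against $\mu$, applying the It\^o isometry and the Lipschitz bounds, and then using the displayed inequality to absorb the $W_2$-term, one obtains a Gronwall-type recursion purely in $\mathbb E\sup_{s\le t}\|Y^{(k)}_s-Y^{(k-1)}_s\|_{\mathcal H}^2$, hence a strict contraction for $T$ small and a unique fixed point on $[0,T]$. Since the coefficients are globally bounded (compactness of $M$ and $\mathcal P(M)$), $T$ may be chosen independently of the initial configuration, and the solution extends to $[0,\infty)$ by concatenation. That the fixed point solves \eqref{Wechselwirkung} in the stated test-function sense follows by applying It\^o's formula in $\R^N$ to $f\circ\iota^{-1}$ for $f\in\ccc^\infty(M)$ and reading off the generator $\widetilde V_0$, the Stratonovich correction being precisely the gap between the It\^o and Stratonovich formulations.

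For uniqueness I would run the same Gronwall argument on the difference of two solutions driven by the same Brownian motions with the same initial flow: writing $Y,Y'$ for their embeddings, $\mathbb E\|Y_t-Y'_t\|_{\mathcal H}^2\le C\int_0^t \mathbb E\|Y_s-Y'_s\|_{\mathcal H}^2\,\dd s$ after bounding the $W_2$-distance of the marginals by $\|Y_s-Y'_s\|_{\mathcal H}$ as above, so $Y\equiv Y'$ and hence $x_\mu\equiv x'_\mu$ as $M$-valued flows, which together with the construction above gives strong well-posedness.

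The step I expect to be the main obstacle is the interaction/self-consistency coupling on a curved state space: one must simultaneously keep the ambient flow on $\iota(M)$ (which is what forces the use of Stratonovich integrals and tangent extensions), transfer the intrinsic Wasserstein estimates to the ambient $L^2$-norm via the bi-Lipschitz bound of Lemma~\ref{Bilipschitzlemma}, and check that the hypotheses ``Lipschitz in $\mu$, with Lipschitz derivatives'' survive both the Nash embedding and the Stratonovich-to-It\^o conversion with constants \emph{uniform in} $\mu$. It is this uniformity, rather than any single estimate, on which the contraction argument hinges.
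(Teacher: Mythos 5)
Your overall strategy is the one the paper uses -- Nash embedding, tangential extension of the vector fields, a Picard iteration with the measure lagged one step, Gr\"onwall estimates in which Lemma~\ref{Bilipschitzlemma} converts $\gamma_2$-distances of pushforwards into ambient $L^2$-distances of the embedded flows, and uniqueness read off from the Euclidean equation. However, as written your iteration has a genuine well-definedness gap. You define the iterates \emph{explicitly}, $Y^{(k+1)}_t(u)=\iota(u)+\int_0^t(\cdots)(Y^{(k)}_s(u),\mu^{(k)}_s)\,\dd s+\sum_i\int_0^t\widehat V_i(Y^{(k)}_s(u),\mu^{(k)}_s)\,\dd B^i_s$, and you set $\mu^{(k)}_s=\mu\circ(\iota^{-1}Y^{(k)}_s)^{-1}$. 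The invariance argument you invoke (tangent fields plus the Stratonovich chain rule keep the flow on $\iota(M)$) applies to \emph{solutions} of the SDE, not to Picard iterates: already $Y^{(1)}_t(u)=\iota(u)+t\,\widehat V^{tot}_0(\iota(u),\mu)+\sum_i\widehat V_i(\iota(u),\mu)B^i_t$ moves linearly in the ambient space along tangent directions and leaves $\iota(M)$ in general, so $\iota^{-1}Y^{(1)}_s$ is undefined, $\mu^{(1)}_s$ is not an element of $\PP_2(M)$, and the extended fields $\widehat V_i(\cdot,\mu)$ -- which you only constructed for $\mu$ supported on the manifold -- cannot be evaluated at the next step. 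The contraction estimate itself is fine, but the object it is supposed to control does not exist.

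There are standard repairs, and the paper's proof is essentially one of them: lag \emph{only the measure} and at each step solve the genuine SDE $dx^{n}=F_{n-1}(x^{n},\circ\,\dd t)$ driven by the tangent, $\mu^{n-1}$-frozen vector fields, invoking Kunita's theory so that every iterate is an $M$-valued flow and $\mu^{n}_t\in\PP(M)$ automatically; the Gr\"onwall/bi-Lipschitz estimates are then exactly as you propose. Alternatively you could keep your explicit scheme but either (i) extend the measure dependence of the $\widehat V_i$ Lipschitz-continuously to all of $\PP_2(\R^N)$ (with the Euclidean Wasserstein distance) and define $\mu^{(k)}_s$ as the pushforward in $\R^N$, verifying only at the end that the limit, being a solution with tangent coefficients started on $\iota(M)$, stays on $\iota(M)$, or (ii) insert a smooth tubular-neighbourhood retraction $\pi$ onto $\iota(M)$ and work with $\mu\circ(\iota^{-1}\pi(Y^{(k)}_s))^{-1}$, adding the Lipschitz constant of $\pi$ to the Gr\"onwall loop. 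Without one of these fixes the proposed induction does not get off the ground, even though every individual estimate in it is correct.
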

\begin{beweis}

Let $ \mu\in \Pm$
 consider the following equation  
\begin{align}\label{ohneWechselwirkung}
    \begin{cases}
        dx^0_\mu(u,t)&= \int_0^t F_0(x^0_\mu(u,t),\circ \dd t)\\
        x^0_\mu(u,0)&= u \in M\\
    \end{cases}
\end{align}
where 
\begin{align*}
F_0(t)= V_0(\cdot,\mu)t + \sum_{i=1}^n V_i(\cdot,\mu)B^i_t    
\end{align*}
by \cite{kunita1990stochastic} there exists a unique strong solution to the equation \eqref{ohneWechselwirkung}. Note that this solution is a stochastic flow  of diffeomorphisms and thus 
\begin{align*}
    \lim_{t\to s}\gamma^2_2\left(\mu\circ (x_\mu^0(\cdot,t))^{-1},\mu\circ (x^0_\mu(\cdot,s))^{-1}\right)\le \lim_{t\to s}\int_M \gamma_2(x^0_\mu(u,t),x^0_\mu(u,s))^2\mu(\dd u)= 0 
\end{align*}
where the last equality follows from continuity for all $u$ almost surely, as well as from compactness of $M$ which allows us to use the dominated convergence theorem. 
Consider now the iteratively for $n\in \N$
\begin{align*}
    F_n(t)= \int_0^t V_0(\cdot,\mu^{n-1}_s) \dd s + \sum^n_{i=1}\int_0^t V_i(\cdot,\mu^{n-1}_s) \dd B_s 
\end{align*} 
where $\mu_t^{n-1}= \mu\circ x^{n-1}_\mu(\cdot,t)$. Note that 
this is is a vectorfield valued semimartingale. Again by \cite{kunita1990stochastic} there exists a unique strong solution to the equation 
\begin{align*}
    \begin{cases}
        dx^n_\mu(u,t)&= \int_0^t F_{n-1}(x^n_\mu(u,t),\circ \dd t)\\
        x^n_\mu(u,0)&= u \in M.
    \end{cases}
\end{align*}
Now we have to show that $ \mu^n$ and $x_\mu^n$ converge in the space $\lp{2}(\Omega,\ccc([0,T],\Pm))$ and $\lp{2}(\Omega,\ccc([0,T],M))$ respectively, where  
\begin{align*}
    \lp{2}(\Omega,S):=\{f:\Omega \to S : \E(d_S(o,f)^2)<\infty, o\in S \}
\end{align*}
for arbitrary polish spaces $S$. 
 To see this consider the Nash imbedding of $M$ in some $\R^N$ as a closed submanifold. Since the Nash imbedding is a diffeomorphism between compact Riemannian manifolds, it is bilipschitz and due to the isometry property of the imbedding the vector field remain Lipschitz with respect to the Wasserstein distance. By enlarging the vector fields to the entire $\R^N$ in the following way by the Nash imbedding we have vector fields $\Bar{V_i}(\cdot,\mu):= d\iota(V_i(\cdot,\mu))$ note moreover 
 \begin{align*}
     d \iota_x: T_x(M) \hookrightarrow T_x(\R^N) \cong \R^N
 \end{align*} 
now since $\iota(M)$ is a compact submanifold there exists $m\in \N$ and $(x_i,U_{i})_{i=1,\dots m}$ with $x_i\in M$ and $U_i$ an $\R^N$-open neighbourhood $U_i$ as well as  smooth functions $\phi_i:U_i\to \R^N$ with the property $\phi^{d+1}_i(x)=\dots=\phi_i^{N}(x)=0$ for all $x\in M$ and $\phi(U_i)$ is a Ball. We then define for $(u_1,\dots,u_N)=u\in  B_{\R^N}(\eps,x_i)$ 
\begin{align*}
    \Bar{V}^j_i(u,\mu):= \Bar{V}_i(\phi_j^{-1}(u_1,\dots,u_d,0,\dots,0),\mu)  
\end{align*}
for all $\mu$ and all $i=1,\dots,n, j=1,\dots,m$. Let $(\rho_j)_{j=1,\dots,m+1}$  be a smooth partition of unity subordinate to $(\phi_j(U_j))_{j=1,\dots,m+1}$ where 
\begin{align*}
    \phi_{m+1}(U_{m+1}):= \R^N\setminus(\cup^{m+1}_{j=1}\phi_j(U_j))
\end{align*}
then 
\begin{align*}
    \Bar{V}_i(u,\mu)= \sum^{m+1}_{j=1} \rho_j \Bar{V}^j_i(u,\mu) 
\end{align*}
 where $ \Bar{V}^j_i\equiv 0$ for all $i=1,\dots, n$. A similar construction can be done for the vector field $\Tilde{V}_0$. By definition $\Bar{V}_i(\cdot,\mu)$ is smooth for all $\mu\in \PP_2(M)$ and $\Bar{V}_i(\Bar{u},\cdot)$ is Lipschitz for all $\Bar{u}\in \R^N$ for $i=1,\cdot,n$ the same holds for $\Bar{\Tilde{V}}_0$   
 we get that $\iota(x^n):= \Bar{x}^n$ solves the following equation
\begin{align*}
    \begin{cases}
        d\Bar{x}^n(\Bar{u},t)&= \Bar{\Tilde{V_0}}(\Bar{x}^n,\mu^{n-1}_t) \dd t + \sum^n_{i=1} \Bar{V}_i(\Bar{x}^n(u,t),\mu^{n-1}_t) \dd B^i_t. \\
       \Bar{x}^n(\Bar{u},0) &= \Bar{u} \quad \forall u\in M
    \end{cases}
\end{align*}
where $\Bar{V_i}$ are the vector fields $V_i$ enlarged to $\R^N$ for all $i=0,\dots,n$. It is important to note that $\Bar{x}$ stays in $\iota(M)$ for all $t\ge 0$ a.s. for all $n\in \N$.  
It is important to note that 
\begin{align*}
    \mu^{n-1}_t =\mu \circ \left(\iota^{-1}(\Bar{x}^n (\iota(\cdot),t))\right)^{-1} 
\end{align*}
Then by standard procedure, see  \cite{Dorogovtsev+2024}   we can thus derive for all $\Bar{u}\in \iota(M)$.

\begin{align*}
    \E(\sup_{0\le t \le T}\abs{\Bar{x}^{n+1}(\Bar{u},t)-\Bar{x}^{n}(\Bar{u},t)}^p) \le C \int_0^T \E(\gamma^M_2(\mu^{n}_t,\mu_t^{n-1})^p) \dd t. 
\end{align*}
Furthermore we get by Lemma \ref{Bilipschitzlemma}
\begin{align*}
    \E(\sup_{0\le t\le T}\gamma^M_2(\mu^n_t,\mu^{n-1}_t)^p)&\le \left(\int_M \E\left(\sup_{0\le t\le T}d_M\left(x^n(u,t),x^{n-1}(u,t)\right)^2\right) \mu(\dd u)\right)^{\frac{p}{2}} \\
    &\le C \int_0^T \E(\gamma^M_2 (\mu^{n-1}_t,\mu_t^{n-2})^p) \dd t 
\end{align*}
therefore the convergence of $\mu^n_t$ and $\Bar{x}^n(u,t)$ can be checked as usual. Hence $\Bar{x}^n$ and $\mu^n_t$ converge to some random fields $\Bar{x}_t$ and $\mu_t$ thus 
\begin{align*}
    F(t)f= \int_0^t V_0  (\cdot,\mu_s)f \dd s+ \sum^n_{i=1} \int_0^t V_i(\cdot,\mu_s)f \dd B^i_s 
 \end{align*}
and $x_\mu(u,t)=\iota^{-1}(\Bar{x}(\iota(u),t))$ solves \eqref{Wechselwirkung}. Note moreover that up to a subsequence, we have to show that $\mu_t= \mu \circ x_\mu^{-1}$. 
\begin{align*}
 \E(\gamma^2_2(\mu_t,\mu\circ x_\mu^{-1}(\cdot,t)))&= \lim_{n\to \infty}\E\left( \gamma_2^2(\mu^n_t,\mu_t)\right)\\
 &\le\lim_{n\to \infty}\int_M\E( d^2_M(\iota(x^n_\mu(\iota(u),t),x_\mu(u,t))) \mu(\dd u)=0
\end{align*}
implying $\mu_t=\mu\circ x_\mu^{-1}(\cdot,t)$. 
Uniqueness of the solutions follows from the uniqueness in $\R^N$ see \cite{Dorogovtsev+2024}.  
 \end{beweis}
\begin{lemma}\label{Massabsch}
Let $ \mu,\nu \in \PP_2(M) $ then 
\begin{align*}
    \E(\sup_{0\le t\le T}\gamma^p_2(\mu_t,\nu_t))\le C \gamma^p_p(\mu,\nu)
\end{align*}
for all $p\ge 2$ where 
\begin{align*}
    \gamma^p_p(\mu,\nu) = \inf_{\kappa\in C(\mu,\nu)} \underset{M\times M}{\int \int} d^p_M(u,v)\kappa(\dd u,\dd v)
\end{align*}
\end{lemma}
\begin{proof}
    Let $\iota : M \to \iota(M)\subset \R^N$ for some $N\in \N$ be the Nash embedding. And consider $\Bar{x}_\mu(\iota(u),t):=\iota(x_\mu(u,t))$ and 
    \begin{align*}
        \begin{cases}
            d\Bar{x}_\mu(v,t)&= \Bar{\Tilde{V}}_0(\Bar{x}_\mu(v,t),\mu_t)\dd t + \sum_{i=1}^n\Bar{V}_i (\Bar{x}_\mu(v,t),\mu_t)\dd B_t^i  \\
        \Bar{x}_\mu(v,0)&= v \qquad \forall v\in \R^N
        \end{cases} 
    \end{align*}
    where as usual the vector fields are enlarged on the entire $\R^N$
    Then one can show 
    \begin{align*}
        \E(\sup_{0\le t\le T} \abs{\Bar{x}_\mu(v_1,t)-\Bar{x}_\mu(v_2,t)}^p )\le C\abs{v_1-v_2}^p.
    \end{align*}
moreover consider for $\mu,\nu\in \PP_2(M)$ and $p\ge 2$
\begin{align*}
   & \E(\sup_{0\le t\le T} \abs{\Bar{x}_\mu(u,t)-\Bar{x}_\nu(u,t)}^p)\\
   \le& \int_0^T\E(\abs{\Bar{x}_\mu(u,t)-\Bar{x}_\nu(u,t)}^p+\gamma^p_2(\mu_t,\nu_t))\dd t 
\end{align*}
Grönwall's inequality thus imply 
\begin{align*}
   \E(\sup_{0\le t\le T} \abs{\Bar{x}_\mu(u,t)-\Bar{x}_\nu(u,t)}^p)\le C \E\left(\sup_{0\le t\le T}\gamma^p_2(\mu_t,\nu_t)\right)
\end{align*}
 Now consider for the optimal coupling $\kappa\in C(\mu,\nu)$
 \begin{align*}
     \E(\gamma_2(\mu_t,\nu_t)^p)&\le \left(\underset{M\times M}{\int\int} \E(d^2_M(x_\mu(u,t),x_\nu(v,t))) \kappa(\dd u,\dd v)\right)^{\frac{p}{2}} \\
     &\le C \underset{M\times M}{\int\int} \E(\abs{\iota(x_\mu(\iota(u),t))-\iota(x_\nu(\iota(v),t))}^p) \kappa(\dd u,\dd v)  \\
     &\le C\Bigg(\Big(\underset{M}{\int} \E(\abs{\iota(x_\mu(\iota(u),t))-\iota(x_\nu(\iota(u),t))}^2) \mu(\dd u)\Big)^{\frac{p}{2}}  \\
     &+ \underset{M\times M}{\int\int} \E\left(\abs{\iota(x_\nu(u,t))-\iota(x_\nu(v,t))}^p\right) \kappa(\dd u,\dd v)\Bigg) \\
     &\le C(\int_0^t \gamma_2(\mu_s,\nu_s)^p \dd s + \gamma^p_p(\mu,\nu)
 \end{align*}
 Grönwalls' Lemma yields the result.
 \end{proof}
 \begin{corollary}\label{Lipschitz}
     Let $u,v\in M$, $\mu,\nu \in\PP_2(M)$ and $p\ge 2$ then 
     \begin{align*}
         \E\left(d(x_\mu(u,t),x_\nu(v,t)^p\right)\le C(d^p(u,v) +\gamma^p_p(\mu,\nu)) 
     \end{align*}
 \end{corollary}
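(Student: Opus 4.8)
The plan is to combine the two preceding estimates (Lemma~\ref{Massabsch} on the measure component and the flow estimate established inside its proof) via the embedding-and-Gr\"onwall scheme already used twice in this section. First I would pass to the Nash embedding $\iota\colon M\to\iota(M)\subset\R^N$, replace $d_M$ by the ambient distance (legitimate by Lemma~\ref{Bilipschitzlemma} up to the constant $C$), and work with the enlarged vector fields $\bar V_i$ on $\R^N$ so that $\bar x_\mu(v,t):=\iota(x_\mu(\iota^{-1}(v),t))$ solves the $\R^N$-valued SDE with Lipschitz (in both arguments) coefficients. The quantity to control is then $\E\big(d_M(x_\mu(u,t),x_\nu(v,t))^p\big)\le C\,\E\big(|\bar x_\mu(\iota(u),t)-\bar x_\nu(\iota(v),t)|^p\big)$.

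Next I would split this difference in the now-standard way: insert the intermediate term $\bar x_\nu(\iota(u),t)$, so that
\begin{align*}
|\bar x_\mu(\iota(u),t)-\bar x_\nu(\iota(v),t)|^p
\le 2^{p-1}\Big(|\bar x_\mu(\iota(u),t)-\bar x_\nu(\iota(u),t)|^p + |\bar x_\nu(\iota(u),t)-\bar x_\nu(\iota(v),t)|^p\Big).
\end{align*}
The second term is handled by the flow Lipschitz estimate proved inside Lemma~\ref{Massabsch}, giving a bound $C\,|\iota(u)-\iota(v)|^p\le C\,d_M(u,v)^p$. For the first term, the two processes solve SDEs with the \emph{same} coefficient functions but driven by the measure flows $\mu_s$ and $\nu_s$ respectively and started from the same point $\iota(u)$; applying It\^o's formula, Burkholder--Davis--Gundy, and the Lipschitz property of $\bar V_i$ in both the space and the measure variable yields
\begin{align*}
\E\Big(\sup_{0\le s\le t}|\bar x_\mu(\iota(u),s)-\bar x_\nu(\iota(u),s)|^p\Big)
\le C\int_0^t \E\big(|\bar x_\mu(\iota(u),s)-\bar x_\nu(\iota(u),s)|^p\big)\dd s
 + C\int_0^t \E\big(\gamma_2(\mu_s,\nu_s)^p\big)\dd s,
\end{align*}
and Gr\"onwall's lemma converts this into a bound by $C\int_0^t\E(\gamma_2(\mu_s,\nu_s)^p)\dd s$.

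Finally I would feed in Lemma~\ref{Massabsch}, which gives $\E(\gamma_2(\mu_s,\nu_s)^p)\le C\,\gamma_2(\mu,\nu)^p$ uniformly in $s\in[0,T]$, so the integral term is dominated by $C\,\gamma_2(\mu,\nu)^p$; combining with the flow term $C\,d_M(u,v)^p$ from the split gives the claimed inequality $\E\big(d(x_\mu(u,t),x_\nu(v,t))^p\big)\le C\big(d^p(u,v)+\gamma_2^p(\mu,\nu)\big)$. I expect the only genuinely delicate point to be bookkeeping the dependence on the measure argument correctly in the first term: one must use that $V_i(x,\cdot)$ is Lipschitz in the Wasserstein-$2$ distance (an assumption in Section~2) to turn $|\bar V_i(\bar x_\mu(\iota(u),s),\mu_s)-\bar V_i(\bar x_\nu(\iota(u),s),\nu_s)|$ into a sum $C(|\bar x_\mu(\iota(u),s)-\bar x_\nu(\iota(u),s)| + \gamma_2(\mu_s,\nu_s))$, and then keep the two contributions separate through the BDG/Gr\"onwall step rather than collapsing them prematurely; everything else is the routine estimation already carried out for Theorem~\ref{wohlgestellt} and Lemma~\ref{Massabsch}.
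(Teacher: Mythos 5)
Your proposal is correct and follows essentially the same route as the paper: Nash embedding plus Lemma~\ref{Bilipschitzlemma}, the triangle-inequality split through the intermediate term $\iota(x_\nu(u,t))$, the flow Lipschitz estimate from Lemma~\ref{Massabsch} for the initial-point term, and a Gr\"onwall argument combined with Lemma~\ref{Massabsch}'s bound $\E(\gamma_2(\mu_s,\nu_s)^p)\le C\gamma_2^p(\mu,\nu)$ for the measure term. The paper's own proof is just a compressed version of exactly this argument, deferring the BDG/Gr\"onwall details to the proofs of Lemma~\ref{Massabsch} and Theorem~\ref{wohlgestellt}, which you have spelled out.
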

 \begin{beweis}
    The proof follows by imbedding the equation into $\R^N$ via the Nash imbedding and enlarging the vector fields. Then we get as in Lemma \ref{Massabsch} and as in the proof of Theorem \ref{wohlgestellt}.
    \begin{align*}
        \E(d(x_\mu(u,t),x_\nu(v,t))^p) &\le C\E(\abs{\iota(x_\mu(u,t))-\iota(x_\nu(v,t))}^p)\\
        &\le C\left(\E(\abs{\iota(x_\mu(u,t))-\iota(x_\nu(u,t))}^p) +  \E(\abs{\iota(x_\nu(u,t))-\iota(x_\nu(v,t))}^p) \right)\\
        &\le C(d_M^p(u,v)+\gamma_2^p(\mu,\nu))
    \end{align*}
 \end{beweis}

\section{Smoothness of solution with respect to the measure variable}
Our goal is to prove a Krylov-Veretennikov type decomposition for the measure valued process $(\mu_t)_{t\ge 0}$. It is natural to assume that this will involve a differential operator on the Wasserstein space. Since we cannot expect regularisation to happen for the measure valued process, we have to prove smoothness of the solution with respect to the measure variable. Since our computations are mainly based on embedding the equation into a Euclidean space and enlarging vector fields, we will need a notion of measure derivative which translates naturally from a Riemannian manifold setting into the Euclidean space.

The following definition is due to \cite{ren2021derivative} for more information about measure derivatives of this type consider also the references therein.
\begin{definition}[\cite{ren2021derivative}]\label{Intrdiff}
    Let $F: \mathcal{P}_2(M)\to \R$  and let $V\in \Gamma(TM) $. Furthermore let $\Phi_\eps^V$ denotes the flow with respect to $V$ at time $\eps>0$. Then $F$ is called intrinsically differentiable in direction $V$, if  
    \begin{align*}
        \lim_{\eps\searrow 0} \frac{F(\mu\circ (\Phi^V_\eps(\cdot))^{-1})-F(\mu)}{\eps}=: D^V_IF(\mu).
    \end{align*}
Furthremore it is called intrinsically differentiable, if it is intrinsically differentiable in every direction $V\in \Gamma(TM)$ and $D^V_IF(\mu)$ is continuous with respect to the $\lp{2}(\mu,M,TM)$ norm and linear with respect to $V$.     
\end{definition}

From the definition of intrinsically differentiable it follows immediately, that $D^{\cdot}_IF(\mu): \Gamma(TM) \to \R $ can be uniquely extended continuously to $\lp{2}(\mu,M,TM)$ furthermore, for all $V\in \lp{2}(\mu,M,TM)$, there exists a function $D_IF(\mu)(\cdot): M \to TM$ such that
\begin{align*}
    D^V_I F(\mu) = \int_M \skalarq{D_IF(\mu)(u),V(u)}_{T_uM} \mu(\dd u).
\end{align*}
\begin{bemerkung}
Note that this definition can be introduced for functionals $F:\PP_2(\Rd) \to \R^d$ in exactly the same way.
\end{bemerkung}

Let us furthermore define the same notion for different types of maps from the Wasserstein space. The following definitions are straightforward adaptations from Definition \ref{Intrdiff}.
\begin{definition}
     Let $F: \mathcal{P}_2(M)\to M$  and let $V\in \Gamma(TM) $. $F$ is called intrinsically differentiable if 
    \begin{align*}
        \frac{d}{d\eps}_{\vert_{\eps=0}} F(\mu^\eps):= D^V_IF(\mu)\in T_{F(\mu)}M.
    \end{align*}
Furthremore it is called intrinsically differentiable, if it is intrinsically differentiable for all $V\in \Gamma(TM)$ and $D^V_IF(\mu)$ is continuous with respect to the $\lp{2}(\mu,M,TM)$ norm and linear with respect to $V$. 
\end{definition}
As before $D^V_IF(\mu)$ can be extended to all $V\in \lp{2}(\mu,M,TM)$ uniquely. Furthermore there exists a linear map $D^V_IF(\mu)(u): T_uM \to T_{F(\mu)}M$ such that
\begin{align*}
    D^V_IF(\mu)= \int_M D^V_IF(\mu)(u)V(u) \mu(\dd u)
\end{align*}
this can be easily shown in the following way. 
Fix a basis $(v_1,\dots,v_d)$ of $T_{F(\mu)}M$, then we get for all $V\in \lp{2}(\mu,M,TM)$ 
\begin{align*}
    D^V_IF(\mu)= \sum^d_{i=1}\skalarq{D^V_IF(\mu),v_i}_{T_{F(\mu)}M}v_i
\end{align*}
note that $V\mapsto \skalarq{D^V_IF(\mu),v_i}_{T_{F(\mu)}M} $ is linear and continuous and linear with respect to $V$. Thus there exists a function $D_I^{V,i}F(\mu)(\cdot): M \to TM$ such that 
\begin{align*}
    \skalarq{D^V_IF(\mu),v_i}_{T_{F(\mu)}M}= \int_M \skalarq{D_I^{V,i}F(\mu)(u),V(u)}_{T_uM} \mu(\dd u)
\end{align*}
for all $i=1,\dots,d$. Hence 
\begin{align*}
    D^V_IF(\mu)= \sum^d_{i=1}\int_M \skalarq{D_I^{V,i}F(\mu)(u),V(u)}_{T_uM} \mu(\dd u)v_i= \int_M \sum^d_{i=1} \skalarq{D_I^{i}F(\mu)(u),V(u)}_{T_uM}v_i \mu(\dd u)
\end{align*}
and therefore the linear map is defined as follows for all $L\in T_uM$:
\begin{align*}
    D_I^VF(\mu)(u)L := \sum^d_{i=1} \skalarq{D_I^{V,i}F(\mu)(u),L}_{T_uM}v_i
\end{align*}
which is linear and therefore continuous for all $u\in M$.
 Lastly we define the intrinsic derivative for vector field valued maps. 
 \begin{definition}
       Let $F: M\times\mathcal{P}_2(M)\to TM$  and let $V\in \Gamma(TM) $. where $F(\cdot,\mu)\in \Gamma(TM)$ for all $\mu\in \PP_2(M)$. Then $F$ is called intrinsically differentiable in direction $V$, if  
     \begin{align*}
         \frac{d}{d\eps}_{\vert_{\eps=0}} F(u,\mu^\eps):= D^V_IF(u,\mu)\in T_{u}M.
     \end{align*}
     for all $u\in M$.
 Furthermore it is called intrinsically differentiable, if it is intrinsically differentiable for all $V\in \Gamma(TM)$ and $D^V_IF(u,\mu)$ is continuous with respect to the $\lp{2}(\mu,M,TM)$ norm and linear with respect to $V$ for all $u\in M$. 
\end{definition}
 There exists a function $D_IF(u,\mu) (\cdot): TM \to T_uM$ such that
 \begin{align*}
     D_I^VF(u,\mu)= \int_M D_IF(u,\mu) (x) V(x) \mu(\dd x)
 \end{align*}
 where $D_IF(u,\mu)(x): T_xM\to T_uM$ is linear. The Example 3.5 $i)$ is due to Remark 1.1 \cite{ren2021derivative}. 
 \begin{example}
     \begin{enumerate}[label=$\roman*)$]
         \item Let $f\in \ccc^\infty(M)$ and $F: \PP_2(M)\to \R$, defined by 
         \begin{align*}
             F(\mu):=\int_M f(x) \mu(\dd x )
         \end{align*} 
         is intrinsically differentiable with 
         \begin{align*}
             D_I^VF(\mu)&= \int_M \skalarq{\nabla f(x),V(x)}_{T_xM}\\
             D_IF(\mu)(x) &= \nabla f(x)
         \end{align*}
         \item Let $\gamma:\R \to M$ be a smooth curve and $f\in \ccc^\infty(M)$ then $F:\PP_2(M) \to M$, defined by 
         \begin{align*}
             F(\mu):= \gamma(\skalarq{f,\mu}) 
         \end{align*}
         is intrinsically differentiable with 
         \begin{align*}
             D^V_IF(\mu)&= \gamma^\prime(\skalarq{f,\mu})\int_M \skalarq{\nabla f(x),V(x)}_{T_xM} \mu(\dd x)\\
              D_IF(\mu)(u) (\cdot) &= \skalarq{\nabla f(u), \cdot}_{T_uM} \gamma^\prime(\skalarq{f,\mu})
         \end{align*}
         \item Let $f\in \ccc^\infty (M\times M)$ then $F: M\times \PP_2(M) \to TM$, defined by
         \begin{align*}
             F(u,\mu)= \int_M \nabla_u f(u,x)\mu(\dd x)
         \end{align*}
         is intrinsically differentiable with 
         \begin{align*}
             D^V_I F(u,\mu)(x)&= \int_M \left(\nabla_x \nabla_u f(u,x)\right) V(x) \mu(\dd x) \\
             D_I F(u,\mu) (x) &= \nabla_x \nabla_u f(u,x)  
         \end{align*}
         
     \end{enumerate}
 \end{example}


In order to do meaningful computations we need good formulae for the intrinsic derivatives, a good idea is the approach due to Proposition 5.35 in \cite{carmona2018probabilistic} Volume-$I$ which shows that by restricting the map to Dirac measures one can show that measure differentiable maps can be identified as smooth maps from spatial variables by the so-called empirical projection map. Since our notion of measure derivative is different from the one in \cite{carmona2018probabilistic}, most importantly since there is no straightforward analogous definition for measures on manifolds, since the space of random variables with values in a compact manifold is not a Hilbert space, therefore a lifting of the map is not useful. For different notions of measure derivatives consider \cite{ren2021derivative} and references therein, they also introduce a type L-derivative which in spirit has similarities with the L-derivative defined in \cite{carmona2018probabilistic}.  The following formula is motivated by Corollary 2.2 \cite{ren2021derivative}, where a related formula has been shown for the L-derivative. 
\begin{lemma}\label{diskreteapprox}
    Let $F:\PP_2(M)\to \R$ be intrinsically differentiable and Lipschitz, consider $F:\otimes_{i=1}^n M \to \R$ defined by 
    \begin{align*}
        (u_1,\dots,u_n) \mapsto F(\frac{1}{n} \sum^n_{i=1} \delta_{u_i})
    \end{align*}
    is $\ccc^1(\otimes^n_{i=1} M)$ for all $u\in M$ with 
    \begin{align*}
        \nabla_{u_i} F(u_1,\dots,u_n)= \frac{1}{n} D_LF(\frac{1}{n}\sum^n_{i=1}\delta_{u_i})(u_i) 
    \end{align*}
 \end{lemma}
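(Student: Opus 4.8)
The plan is to reduce the statement to the known Euclidean result of Ren–Wang on the derivative formula for functions of empirical measures, transported back to the manifold via the Nash embedding $\iota\colon M\hookrightarrow\R^N$ used throughout the paper. First I would note that for $\mu=\frac1n\sum_{i=1}^n\delta_{u_i}$ the intrinsic (Lions) derivative at the atom $u_i$ is, by the general theory, the gradient with respect to $u_i$ of the map $(u_1,\dots,u_n)\mapsto F(\frac1n\sum\delta_{u_j})$, up to the factor $\frac1n$ coming from the weights; so the content of the lemma is exactly that this finite-dimensional map is $\ccc^1$ and that the chain rule holds. Concretely I would set $G(v_1,\dots,v_n):=F(\frac1n\sum_{i=1}^n\delta_{\iota^{-1}(v_i)})$ for $v_i$ in a tubular neighbourhood of $\iota(M)$ (using the smooth nearest-point retraction so that $\iota^{-1}$ extends smoothly), observe that $G$ is Lipschitz on the compact set $\iota(M)^n$ because $F$ is Lipschitz and $\iota$ is bi-Lipschitz by Lemma~\ref{Bilipschitzlemma}, and then invoke the Euclidean statement (the manifold being embedded as a compact submanifold, all the hypotheses needed for \cite{ren2021derivative} are in force).

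The key steps, in order, would be: (1) fix $\mu=\frac1n\sum\delta_{u_i}$ and a direction $V\in\Gamma(TM)$; compute $\mu^\eps=\mu\circ(\Phi^V_\eps)^{-1}=\frac1n\sum\delta_{\Phi^V_\eps(u_i)}$, so that $\eps\mapsto F(\mu^\eps)$ is literally $\eps\mapsto G(\iota(\Phi^V_\eps(u_1)),\dots,\iota(\Phi^V_\eps(u_n)))$. (2) Differentiate at $\eps=0$ using the (to-be-established) differentiability of $G$ and the chain rule: $\frac{d}{d\eps}\big|_{0}F(\mu^\eps)=\sum_{i=1}^n\langle\nabla_{v_i}G(\iota(u_1),\dots,\iota(u_n)),d\iota_{u_i}V(u_i)\rangle$, which after pulling back the embedding gives $\sum_{i=1}^n\langle\nabla_{u_i}G^\flat(u_1,\dots,u_n),V(u_i)\rangle_{T_{u_i}M}$ where $G^\flat=F(\frac1n\sum\delta_{\cdot})$ is the manifold version. (3) Compare this with the defining identity $D^V_IF(\mu)=\int_M\langle D_IF(\mu)(x),V(x)\rangle_{T_xM}\,\mu(dx)=\frac1n\sum_{i=1}^n\langle D_IF(\mu)(u_i),V(u_i)\rangle_{T_{u_i}M}$; since $V$ is arbitrary and the $u_i$ can be perturbed independently, matching the two expressions forces $\nabla_{u_i}G^\flat=\frac1n D_IF(\mu)(u_i)$, which is the claimed formula (written $D_L$ in the statement). (4) Finally, continuity of $x\mapsto D_IF(\mu)(x)$ in $\mu$ (part of intrinsic differentiability) transfers to continuity of the $\nabla_{u_i}G^\flat$ in $(u_1,\dots,u_n)$, giving the $\ccc^1$ regularity; smoothness of $\iota$ handles the rest.

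The main obstacle is step~(2)/(4): establishing that the finite-dimensional map $(u_1,\dots,u_n)\mapsto F(\frac1n\sum\delta_{u_i})$ is genuinely $\ccc^1$ (not merely Gateaux-differentiable along flow directions with a formally linear, continuous derivative). This is precisely where one must lean on \cite{ren2021derivative}: their argument shows that Lipschitz continuity of $F$ together with intrinsic differentiability upgrades to joint continuity of the derivative, hence $\ccc^1$ by the standard criterion that continuous partials imply $\ccc^1$. On the manifold this requires care that the flows $\Phi^V_\eps$ generate, as $V$ ranges over vector fields, all directions at each $u_i$ and that the nearest-point retraction from a tubular neighbourhood is smooth enough to let us push the whole argument through $\iota$; since $M$ is compact and embedded as a closed submanifold (as already set up in the proof of Theorem~\ref{wohlgestellt}), these are available, and the rest is bookkeeping with the chain rule.
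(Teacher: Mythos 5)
Your reduction has a genuine gap at exactly the point you flag as ``the main obstacle'', and the way you propose to close it does not work. First, leaning on \cite{ren2021derivative} for the upgrade from intrinsic differentiability to $\ccc^1$ regularity of the configuration map is essentially citing the statement to be proven: the paper attributes the formula to that reference but still supplies a proof, because here the intrinsic derivative is defined via pushforwards along flows $\Phi^V_\eps$ and the result must be re-derived in that setting. Moreover, your transfer to the Euclidean statement is not actually available: the Euclidean result concerns functionals on $\PP_2(\R^d)$, whereas your $G$ is only defined on point configurations in a tubular neighbourhood of $\iota(M)$; to invoke the Euclidean theorem you would need to extend $F$ itself from $\PP_2(M)$ to $\PP_2(\R^N)$ while preserving Lipschitz continuity and intrinsic differentiability, and no such extension is provided (the paper's extension Lemma~\ref{GlatteErw} covers only vector fields of the special classes $\mathcal{A}\cup\mathcal{B}$, not arbitrary functionals). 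So the argument either presupposes the differentiability of $G$ (your step~(2) says as much) or defers it to a reference whose hypotheses you have not verified in this setting.

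Second, and more concretely, your step~(3) rests on the claim that ``the $u_i$ can be perturbed independently'', which fails precisely at the configurations that make this lemma nontrivial: when the atom being perturbed coincides with another atom, every smooth vector field $V$ with $V(x)=v\neq 0$ necessarily moves the coincident atom as well, so the flow-derivative identity produces a term $\tfrac{2}{n}\skalarq{D_IF(\mu)(x),v}$ rather than $\tfrac{1}{n}\skalarq{D_IF(\mu)(x),v}$, and no choice of $V$ isolates a single atom. This collision case is where the paper's proof does its real work: it first establishes the directional derivative at non-collision configurations by choosing $V$ with $V(u_j)=0$ for the other atoms, then handles $x=u_i$ by writing the difference quotient along a geodesic $\exp_x(\theta v)$ as an integral of the already-established derivative over non-collision configurations, passing to the limit using continuity of $D_IF$, and finally uses the Lipschitz property of $F$ to replace geodesics by arbitrary curves with the same initial velocity, which is what yields genuine $\ccc^1$ differentiability. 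Your proposal never addresses this case, so as written it proves the formula only at configurations with pairwise distinct atoms and does not establish the claimed $\ccc^1$ regularity on all of $\otimes_{i=1}^n M$.
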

\begin{proof}Fix $(u_1,\dots,u_{n-1})\in \otimes^n_{i=1}M$ and $i\in\{1,\dots,n\}$
    We will show that for fixed $(u_1,u_{i-1},u_{i+1},\dots,u_n)$ the map 
    \begin{align*}
        x \mapsto F(\frac{1}{n}\sum^{n-1}_{i=1}\delta_{u_i}+ \frac{1}{n}\delta_{x})
    \end{align*}
    is differentiable. Let first $x\in M\setminus\{u_1,\dots,u_{n-1}\}, v\in T_xM$ and construct a smooth vector field $V$ such that $V(x)= v$ and $V(u_j)=0$ for all $j=1,\dots,n-1$. It follows that $\Phi^V_\eps(u_j)=u_j$ for all $j=1,\dots,n-1$. Then we have 
    \begin{align*}
        &\lim_{\eps \searrow 0} \frac{F(\frac{1}{n} \sum^{n-1}_{i=1} \delta_{u_i}+\frac{1}{n}\delta_{\Phi^V_\eps(x)}-F(\frac{1}{n} \sum^{n-1}_{i=1} \delta_{u_i}+\frac{1}{n}\delta_x)}{\eps}\\
        =&\lim_{\eps \searrow 0} \frac{F(\frac{1}{n} \sum^{n-1}_{i=1} \delta_{\Phi^V_\eps(u_i)}+\frac{1}{n}\delta_{\Phi^V_\eps(x)}-F(\frac{1}{n} \sum^{n-1}_{i=1} \delta_{u_i}+\frac{1}{n}\delta_x)}{\eps} \\
        =& \frac{1}{n} \skalarq{D_LF(\frac{1}{n}\sum^{n-1}_{i=1}\delta_{u_i}+ \frac{1}{n}\delta_{x}), v}_{T_xM}  
    \end{align*}
    This shows that 
    \begin{align*}
          x \mapsto F(\frac{1}{n}\sum^{n-1}_{i=1}\delta_{u_i}+ \frac{1}{n}\delta_{x})
    \end{align*}
    is differentiable for all $x\neq u_i$ for all $i=1,\dots,n-1$. Now let $x=u_i$ for some $i=1,\dots,n-1$. 
    Let $r>0$ be small enough such that $\exp_x(r'v)\notin \{u_1,\dots,u_n\}$ for all $r'<r$, then we get 
    \begin{align*}
    &\frac{F(\frac{1}{n}\sum^{n-1}_{i=1}\delta_{u_i}+ \frac{1}{n}\delta_{\exp_x(rv)} )-F(\frac{1}{n}\sum^{n-1}_{i=1}\delta_{u_i}+ \frac{1}{n}\delta_x )}{r}\\
    &= \frac{1}{r} \int_0^r \frac{d}{d\theta} F(u,\frac{1}{n}\sum^{n-1}_{i=1}\delta_{u_i}+ \frac{1}{n}\delta_{\exp_x(\theta v)}) \dd \theta \\
    &= \frac{1}{r}\int_0^r\frac{1}{n} \skalarq{D_IF(u,\frac{1}{n}\sum^{n-1}_{i=1} \delta_{u_i}+ \frac{1}{n}\delta_{\exp_x(\theta v)})(\exp_x(\theta v)), \Gamma^\theta_0(\exp)(v)}_{T_{\exp_x(\theta v)}M} \dd \theta
    \end{align*}
     Now letting $r\searrow 0$ and using the continuity of $D_I$ we get the result. From the Lipschitz property one can now derive that the limit exists for all curves such that $\gamma^\prime(0)=v$ and $\gamma(0)=x$ since one can easily  prove that $v_\eps=\exp_x^{-1}(\gamma(\eps))= \eps v+ o(\eps)$ and thus 
     \begin{align*}
         \lim_{\eps\searrow 0} \frac{F(\frac{1}{n} \sum^{n-1}_{i=1}\delta_{x_i}+\frac{1}{n}\delta_{\gamma(\eps)})-F(\frac{1}{n}\sum^{n-1}_{i=1}\delta_{u_i}+ \frac{1}{n}\delta_{\exp_x(v_\eps)}) }{\eps}=0       
     \end{align*}
     
 \end{proof}
 One can define higher order intrinsic derivatives for functionals $F:\PP_2(M) \to\R $
 the intrinsic derivative by identifying the intrinsic derivative with the function $D_IF:M\times \PP_2(M)\to TM$. 
 therefore
 \begin{align*}
    D^2_IF(\mu)(x_1,x_2)\in L(T_{x_2}M,T_{x_1}M),D^3_IF(\mu)(x_1,x_2,x_3)\in L(T_{x_3}M,L(T_{x_2}M,T_{x_1}M)),\dots
\end{align*}
We denote the space of smooth functionals and Lipschitz functionals $F:\mathcal{P}_2(M)\to \R$ as $\ccc_1^{\infty,}(\mathcal{P}_2(M))$, these are the functionals such that they are infinitely often intrinsically differentiable and the functions $D^n_IF(\mu)(\cdot,\dots,\cdot)$ are smooth and $D^n_IF(\cdot)(x_1,\dots,x_n)$ are Lipschitz continuous for all $n\in \N,\mu\in \mathcal{P}_2(M),(x_1,\dots,x_n)\in \otimes^{n}_{i=1}M$. One can define similar spaces for functionals $F:\PP_2(M)\to  \R^N$ and $F: M\times \PP_2(M) \to \R^N$ where the spaces will be called $\ccc_1^{\infty}(\PP_2(M),\R^N)$ and $\ccc_1^{\infty,\infty}(M\times \PP_2(M),\R^N)$ respectively. 
Due to technical reasons, in order to compute Krylov-Veretennikov decompositions we will need smoothness assumptions for the driving vector fields of the SDEs \eqref{Wechselwirkung} with respect to the measure variable, since our main technique is embedding the equation into a Euclidean space and enlarge the vector fields smoothly we will need to enlarge the vector fields smoothly with respect to the measure variable. It is not obvious how to do this, therefore we assume our vector fields to be of special form.  
Consider the following classes of vector fields
\begin{align*}
    \mathcal{A}&:= \{V:M\times \PP_2(M) \to TM\vert\\
    & V(u,\mu)= \underset{M\times\dots \times M}{\int\dots\int} \nabla_u \Phi(u,x_1,\dots,x_n)\mu(\dd x_1)\dots\mu(\dd x_n), \;\Phi\in \ccc^\infty(\otimes^{n+1}_{i=1}M)\}\\
    \mathcal{B}&:= \{V:M\times \PP_2(M)\vert V(u,\mu)=g(u,\skalarq{f_1,\mu},\dots,{\skalarq{f_n,\mu}}),\\& \;g:M \times \R^n \to TM,\; g\in \ccc^\infty(M\times \R^n, TM),\; f_1,\dots,f_n\in \ccc^\infty(M) \}
\end{align*}
We will from now on assume that $(V_i)_{i=0,\dots,n} \in \mathcal{A}\cup\mathcal{B}$. The following result allows us to assume that a vector field enlargement can be carried out in a smooth way with respect to space and measure.

\begin{lemma}\label{GlatteErw}
 Let $V\in \mathcal{A}\cup\mathcal{B}$ and let $\iota: M \to \iota(M)\subset \R^N$ be the Nash imbedding, then there exists an extension $\Tilde{V}: \R^N\times \PP_2(\R^N)\to \R^N$ such that $\Tilde{V} \in \ccc^{\infty,\infty,1}(\R^N \times \PP_2(\R^N),\R^N)$ and $\Tilde{V}_{\vert_{\iota(M)\times \PP_2(\iota(M))}}= d \iota(V)$. 
\end{lemma}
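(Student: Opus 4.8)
The plan is to prove the statement separately for the two classes $\mathcal{A}$ and $\mathcal{B}$, in each case reducing the measure dependence to the ordinary smooth extension of finitely many smooth functions off the closed submanifold $\iota(M)\subset\R^N$. Fix once and for all a tubular neighbourhood $U\supset\iota(M)$ in $\R^N$ with smooth nearest-point retraction $\pi:U\to\iota(M)$, and a cutoff $\chi\in\ccc^\infty_c(U)$ with $\chi\equiv 1$ on a neighbourhood of $\iota(M)$. Recall that $\iota$ is a Riemannian isometry onto $\iota(M)$ equipped with the induced metric, and that for $y\in\iota(M)$ the differential $d\pi_y$ equals the orthogonal projection $P_y:\R^N\to T_y\iota(M)$, which is self-adjoint.

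For $V\in\mathcal{A}$, write $V(u,\mu)=\int_{M^n}\nabla_u\Phi(u,x_1,\dots,x_n)\,\mu(\dd x_1)\cdots\mu(\dd x_n)$ with $\Phi\in\ccc^\infty(M^{n+1})$, and set $\psi:=\Phi\circ(\iota^{-1})^{\otimes(n+1)}$ on $\iota(M)^{n+1}$. Define $\tilde\Phi(y_0,\dots,y_n):=\big(\prod_{j=0}^n\chi(y_j)\big)\,\psi\big(\pi(y_0),\dots,\pi(y_n)\big)$, extended by $0$ outside $U^{n+1}$, so that $\tilde\Phi\in\ccc^\infty_c\big((\R^N)^{n+1}\big)$, and put $\tilde V(y,\nu):=\int_{(\R^N)^n}\nabla_{y_0}\tilde\Phi(y,z_1,\dots,z_n)\,\nu(\dd z_1)\cdots\nu(\dd z_n)$. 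The key point is the restriction identity: since $\chi\equiv1$ near $\iota(M)$ and $d\pi_{y_0}=P_{y_0}$ is self-adjoint with image $T_{y_0}\iota(M)$, one gets for $y_0,\dots,y_n\in\iota(M)$ that $\nabla^{\R^N}_{y_0}\tilde\Phi(y_0,\dots,y_n)=\nabla^{\iota(M)}_{y_0}\psi(y_0,\dots,y_n)=d\iota\big(\nabla^M_u\Phi(\iota^{-1}(y_0),\dots)\big)$ — the normal component created by differentiating $\pi$ cancels precisely because $P_{y_0}$ fixes tangent vectors. Integrating against $\nu\in\PP_2(\iota(M))$ (whose $n$-th tensor power is supported on $\iota(M)^n$) then yields $\tilde V|_{\iota(M)\times\PP_2(\iota(M))}=d\iota(V)$ under the identification $\PP_2(\iota(M))\cong\PP_2(M)$ via $\iota$.

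For $V\in\mathcal{B}$, write $V(u,\mu)=g\big(u,\langle f_1,\mu\rangle,\dots,\langle f_n,\mu\rangle\big)$. Extend each $f_i\in\ccc^\infty(M)$ to $\tilde f_i:=\chi\cdot(f_i\circ\iota^{-1}\circ\pi)\in\ccc^\infty_c(\R^N)$, so $\langle\tilde f_i,\nu\rangle=\langle f_i,\mu\rangle$ whenever $\nu=\iota_*\mu$; and extend $g\in\ccc^\infty(M\times\R^n,TM)$ to some $\tilde g\in\ccc^\infty_c(\R^N\times\R^n,\R^N)$ with $\tilde g(\iota(u),v)=d\iota_u\big(g(u,v)\big)$ — a smooth extension off the closed submanifold $\iota(M)\times\R^n$ followed by a cutoff, the $\R^n$-cutoff being harmless because $\langle\tilde f_i,\nu\rangle$ is bounded over all $\nu\in\PP_2(\R^N)$ (as $\tilde f_i$ is bounded). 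Set $\tilde V(y,\nu):=\tilde g\big(y,\langle\tilde f_1,\nu\rangle,\dots,\langle\tilde f_n,\nu\rangle\big)$; the restriction identity is then immediate.

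It remains to check $\tilde V\in\ccc^{\infty,\infty,1}(\R^N\times\PP_2(\R^N),\R^N)$. Smoothness in the $\R^N$-variable is differentiation under the integral sign, using the compact support of $\tilde\Phi$ (resp. the smoothness of $\tilde g$). For the measure variable, in $\R^N$ the intrinsic derivative along $W$ is $\tfrac{\dd}{\dd\eps}\big|_{\eps=0}$ along the flow of $W$, and for functionals $\nu\mapsto\int\varphi\,\dd\nu^{\otimes k}$ with $\varphi\in\ccc^\infty_c$ one computes that the $m$-th intrinsic derivative is again the integral against $\nu^{\otimes(k+m)}$ (after the usual symmetrisation) of a bounded smooth kernel obtained by differentiating $\varphi$ in $m$ of its slots; for class $\mathcal{B}$ one additionally applies the chain rule through $\tilde g$ together with the fact that $\langle\tilde f_i,\cdot\rangle$ is intrinsically smooth with derivatives $\nabla\tilde f_i,\nabla^2\tilde f_i,\dots$, exactly as in the first example above (and Lemma~\ref{diskreteapprox} gives the matching discrete picture). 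All these derivative functions are bounded and smooth, hence Lipschitz, and Lipschitz dependence on $\nu$ in the Wasserstein distance follows because $\nu\mapsto\nu^{\otimes k}$ is $\gamma_2$-Lipschitz into $\PP_2\big((\R^N)^k\big)$ while the integrands are Lipschitz. I expect the real obstacle to be bookkeeping rather than conceptual: writing down the precise combinatorial form of the higher intrinsic derivatives of $\nu\mapsto\int\varphi\,\dd\nu^{\otimes k}$ and confirming it fits the definition of $\ccc^{\infty,\infty,1}$; the one genuinely non-formal step is the cancellation of the normal component in the restriction identity for $\mathcal{A}$, which is exactly why the nearest-point retraction $\pi$ — rather than an arbitrary smooth extension of $\Phi$ — is used.
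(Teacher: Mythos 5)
Your proof is correct, and it follows the same overall strategy as the paper — embed via Nash, reduce everything to smoothly extending the finite-dimensional kernels ($\Phi$, resp.\ $f_i$ and $g$) off the closed submanifold $\iota(M)\subset\R^N$ with a cutoff to gain compact support, and then verify membership in $\ccc^{\infty,\infty,1}$ by differentiating under the integral — but the extension device is different. The paper extends the vector-field kernel $d\iota_u(\nabla_u\Phi)$ directly, chart by chart, using submanifold charts $(\phi_j,U_j)$ (setting the normal coordinates to zero) and gluing with a partition of unity; the restriction identity is then immediate by construction, and the paper only treats the case $V\in\mathcal{A}$ with one measure slot, declaring the general $\mathcal{A}$ case similar and the $\mathcal{B}$ case obvious. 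You instead extend the scalar potential $\Phi$ globally via the nearest-point retraction $\pi$ of a tubular neighbourhood and a single cutoff, and recover the vector field by taking the Euclidean gradient of the extension; this is cleaner (no chart bookkeeping, and the extension keeps the gradient structure of the kernel), but it obliges you to prove the restriction identity, which you do correctly via $d\pi_y=P_y$ being the self-adjoint orthogonal projection fixing $T_y\iota(M)$ — that is exactly the one non-formal step your construction adds relative to the paper's. You also handle $\mathcal{B}$ explicitly (extension of the $f_i$ by $\chi\cdot(f_i\circ\iota^{-1}\circ\pi)$ and of $g$ along $\iota(M)\times\R^n$, the $\R^n$-cutoff being licit since $\skalarq{\Tilde{f}_i,\nu}$ ranges in a bounded set), which is a genuine improvement in completeness; your concluding verification of smoothness and Lipschitz dependence in the measure variable is at the same level of detail as the paper's (which likewise calls it an easy computation), so no gap is introduced there.
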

\begin{beweis}
    We will only prove the statement in the case when $V(u,\mu)= \int_M \nabla_u \Phi(u,x) \mu(\dd x)$ since for the general case it is similar and for the case $V\in \mathcal{B}$ it is obvious how to proceed. 
First consider the Nash imbedding $\iota:M\to \iota(M) \subset \R^N$ 
then we can extend the function $d\iota_u(\nabla\Phi(u,\iota(\cdot))) $ in the following way, let $(\phi_i,U_i)_{i=1,\dots,m}$ be submanifold charts such that $\phi_i(U_i)$ is a ball. Define for $u=(u_1,\dots,u_N)\in U_j,x=(x_1,\dots,x_N)\in U_k$ 
\begin{align*}
    \Tilde{\nabla_u\Phi}^{j,k}(u,x):= \nabla\Phi(\phi^{-1}_j((u_1,\dots,u_d,0,\dots,0),\phi^{-1}_k(x_1,\dots,x_d,0,\dots,0))
\end{align*}
Now let $(\rho_j)_{j=1,\dots,m+1}$ be a partition of unity subordinate to $(\phi_j(U_j))_{j=1,\dots,m+1}$ where 
\begin{align*}
    \phi_{m+1}(U_{m+1})= \R^N\setminus\overline{(\cup^{m+1}_{j=1}\phi_j(U_j))}
\end{align*}

    then 
    \begin{align*}
       \Tilde{ \nabla_u\Phi(u,x)}= \sum^{m+1}_{j,k=1} \rho_j(u)\rho_k(x) \Tilde{\nabla_u\Phi}^{j,k}(u,x)
    \end{align*}
    is a smooth extension with respect to $(u,x)\in M\times M$, moreover the vector field has compact support and thus all its derivatives are Lipschitz. Hence by an easy computation we can conclude that the extension 
    \begin{align*}
        \Tilde{V}(u,\mu) = \int_M \nabla_u\Tilde{ \Phi}(u,x)\mu(\dd x)  
    \end{align*} 
    has the property $V\in \ccc^{\infty,\infty,1}(\R^N \times \PP_2(\R^N))$, moreover it is easy to see that $\Tilde{V}_{\vert_{\iota(M)\times \PP_2(\iota(M))}}= d\iota(V)$
\end{beweis}

The most important rule for the intrinsic derivative is the chain rule: \begin{lemma}[Chain Rule]\label{Kettenregel}
 Let $F\in \ccc^{1,1}(\PP_2(M))$ be intrinsically differentiable, then we have the following properties.
     Let $x: I \times M \to  M$ where $x_\cdot(u)$ is differentiable and $x: I\times M \to M$ and $\overset{\cdot}{x}:I\times M \to TM$ are continuous then.  
    \begin{align*}
     \frac{d}{d \theta} F(\mu \circ x^{-1}_\theta(\dot))= \int_M \skalarq{D_IF(\mu\circ x^{-1}_\theta(\cdot))(x_\theta(u) ), \overset{\cdot}{x_\theta}(u)}_{T_{x_\theta(u)}M}\mu(\dd u)
    \end{align*}   
 \end{lemma}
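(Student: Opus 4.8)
The plan is to prove the identity first for equal-weight empirical measures $\mu=\tfrac{1}{n}\sum_{i=1}^n\delta_{u_i}$, where it reduces to the ordinary chain rule on the finite product $M^n$, and then to pass to a general $\mu\in\PP_2(M)$ by approximation.

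First I would fix such an empirical $\mu$ and invoke Lemma \ref{diskreteapprox}: the map $\hat F\colon M^n\to\R$, $\hat F(y_1,\dots,y_n):=F(\tfrac{1}{n}\sum_{i=1}^n\delta_{y_i})$, is $\ccc^1$, with $\nabla_{y_i}\hat F(y_1,\dots,y_n)=\tfrac{1}{n}\,D_IF(\tfrac{1}{n}\sum_{j=1}^n\delta_{y_j})(y_i)$. Since each curve $\theta\mapsto x_\theta(u_i)$ is differentiable with velocity $\overset{\cdot}{x_\theta}(u_i)\in T_{x_\theta(u_i)}M$ and $x,\overset{\cdot}{x}$ are continuous, the ordinary chain rule on $M^n$ applied to $\theta\mapsto\hat F(x_\theta(u_1),\dots,x_\theta(u_n))$ gives
\begin{align*}
\frac{d}{d\theta}F(\mu\circ x_\theta^{-1})
&=\sum_{i=1}^n\skalarq{\nabla_{y_i}\hat F(x_\theta(u_1),\dots,x_\theta(u_n)),\overset{\cdot}{x_\theta}(u_i)}_{T_{x_\theta(u_i)}M}\\
&=\frac{1}{n}\sum_{i=1}^n\skalarq{D_IF(\mu\circ x_\theta^{-1})(x_\theta(u_i)),\overset{\cdot}{x_\theta}(u_i)}_{T_{x_\theta(u_i)}M},
\end{align*}
which is precisely $\int_M\skalarq{D_IF(\mu\circ x_\theta^{-1})(x_\theta(u)),\overset{\cdot}{x_\theta}(u)}_{T_{x_\theta(u)}M}\,\mu(\dd u)$ for this $\mu$; the right-hand side is continuous in $\theta$ by continuity of $x$, $\overset{\cdot}{x}$ and joint continuity of $(\nu,y)\mapsto D_IF(\nu)(y)$, so in fact $\theta\mapsto F(\mu\circ x_\theta^{-1})$ is $\ccc^1(I)$ in this case.

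Next I would take a general $\mu\in\PP_2(M)$ and choose equal-weight empirical measures $\mu^{(n)}$ with $\gamma_2(\mu^{(n)},\mu)\to0$, possible since $M$ is compact. Writing $h_n(\theta):=F(\mu^{(n)}\circ x_\theta^{-1})$ and $g_n(\theta):=\int_M\skalarq{D_IF(\mu^{(n)}\circ x_\theta^{-1})(x_\theta(u)),\overset{\cdot}{x_\theta}(u)}_{T_{x_\theta(u)}M}\,\mu^{(n)}(\dd u)$, the previous step gives $h_n\in\ccc^1(I)$ with $h_n'=g_n$. Since $F$ is Lipschitz and, by continuity of $x$ and compactness of $M$ and of closed subintervals $[a,b]\subset I$, one has $\sup_{\theta\in[a,b]}\gamma_2(\mu^{(n)}\circ x_\theta^{-1},\mu\circ x_\theta^{-1})\to0$ (push forward the optimal coupling of $\mu^{(n)},\mu$ and use the uniform continuity of $x$ on $[a,b]\times M$), it follows that $h_n\to h:=F(\mu\circ x_\cdot^{-1})$ uniformly on $[a,b]$. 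For the derivatives, the $\ccc^{1,1}$-hypothesis makes $(\nu,y)\mapsto D_IF(\nu)(y)$ jointly continuous and $\nu\mapsto D_IF(\nu)(y)$ Lipschitz in the Wasserstein distance, while $x_\theta,\overset{\cdot}{x_\theta}$ are bounded with a common modulus of continuity on $[a,b]$ (using compactness of $M$); feeding this, $\gamma_2(\mu^{(n)},\mu)\to0$, and the optimal-coupling estimate for the difference $\int\skalarq{\phi,\overset{\cdot}{x_\theta}}\,\dd\mu^{(n)}-\int\skalarq{\phi,\overset{\cdot}{x_\theta}}\,\dd\mu$ into the definition of $g_n$ gives $g_n\to g$ uniformly on $[a,b]$, where $g$ is the asserted right-hand side. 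The classical theorem on termwise differentiation of a uniformly convergent sequence of $\ccc^1$-functions with uniformly convergent derivatives then yields $h\in\ccc^1(I)$ and $h'=g$, which is the claim.

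I expect the main obstacle to be this uniform-in-$\theta$ convergence $g_n\to g$: one has to control simultaneously the convergence of the ``base'' measures $\mu^{(n)}\circ x_\theta^{-1}\to\mu\circ x_\theta^{-1}$ inside $D_IF$ and the difference of the integrals of the $TM$-valued integrand $u\mapsto\skalarq{D_IF(\cdot)(x_\theta(u)),\overset{\cdot}{x_\theta}(u)}$ against $\mu^{(n)}$ versus $\mu$, all uniformly over a compact $\theta$-interval; compactness of $M$ (finite diameter, bounded geometry, the bi-Lipschitz Nash embedding of Lemma \ref{Bilipschitzlemma}) together with the $\ccc^{1,1}$-regularity of $F$ are exactly what make these estimates uniform. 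A minor point to settle first is that Lemma \ref{diskreteapprox} is stated only for equal-weight empirical measures, so the approximating sequence must be chosen of that form. Alternatively, in the case relevant for the later applications---where $x_\theta$ is a flow of diffeomorphisms---I would bypass the approximation by setting $V_\theta:=\overset{\cdot}{x_\theta}\circ x_\theta^{-1}$, recognising $\mu\circ x_\theta^{-1}$ as the push-forward of $\mu$ along the flow generated by the time-dependent vector field $V_\theta$, and differentiating directly from the definition of $D_I^{V}F$, the Lipschitz continuity of $F$ ensuring that only the first-order datum $V_{\theta_0}$ matters at $\theta_0$.
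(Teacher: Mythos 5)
Your proposal is correct and follows essentially the same route as the paper: establish the formula for equal-weight empirical measures via Lemma \ref{diskreteapprox} and the ordinary chain rule on $M^n$, then pass to a general $\mu$ by Wasserstein approximation with empirical measures, exploiting the Lipschitz continuity of $F$ and of $D_IF$ in the measure variable. The only cosmetic difference is in how the limit is closed: you invoke the classical termwise-differentiation theorem (uniform convergence of $h_n$ and $h_n'$ on compact subintervals), whereas the paper represents the difference quotient via the fundamental theorem of calculus and passes to the limit inside that integral, constructing the approximating empirical sequence explicitly by the law of large numbers; the two mechanisms are interchangeable here.
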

     \begin{proof}
Let $\mu^n= \frac{1}{n} \sum^n_{i=1 } \delta_{x_i}$ then we get 
     \begin{align*}
         \frac{d}{d\theta} F(\mu^n\circ x_{\theta}(\cdot)^{-1})&= \sum^n_{i=1} \skalarq{\nabla_{x_i}F(\frac{1}{n}\sum^n_{i=1}\delta_{x_i}),\overset{\cdot}{x}(x_i)}_{T_{x_{x_i}}M}\\
         &= \int_M \skalarq{D_IF(\mu^n\circ x_{\theta}(\cdot)^{-1})(x_\theta(u)),\overset{\cdot}{x}_{\theta}(u)}_{T_{x_\theta(u)}M}  \mu^n(\dd u).
     \end{align*}
Hence 
\begin{align*}
    &\frac{F(\mu^n\circ x_{\theta+\eps}(\cdot)^{-1})-F(\mu^n\circ x_{\theta}(\cdot)^{-1})}{\eps}\\
    =& \frac{1}{\eps}\int_0^\eps \int_M\skalarq{D_IF(\mu^n\circ x_{\theta}(\cdot)^{-1})(x_\theta(u)),\overset{\cdot}{x}_{\theta}(u)}_{T_{x_\theta(u)}}\mu^n(\dd u) \dd \theta
\end{align*}
We want to let $n\to \infty$. If we choose for arbitrary $\mu\in \PP_2(M)$ an appropriate sequence $\mu^n= \frac{1}{n}\sum^n_{i=1} \delta_{x_i}$ such that 
\begin{align*}
    \gamma_2(\mu^n,\mu) \to 0
\end{align*}
 then the theorem is proven by showing, that the right hand side coverges to the correct terms. We will now show that 
\begin{align*}
 &\lim_{n\to \infty}   \int_M\skalarq{D_IF(\mu^n\circ x_{\theta}(\cdot)^{-1})(x_\theta(u)),\overset{\cdot}{x}_{\theta}(u)}_{T_{x_\theta(u)}}\mu^n(\dd u) \\
 =& \int_M\skalarq{D_IF(\mu\circ x_{\theta}(\cdot)^{-1})(x_\theta(u)),\overset{\cdot}{x}_{\theta}(u)}_{T_{x_\theta(u)}}\mu(\dd u)
\end{align*}
 Note that due to continuity $\abs{x_\gamma}_{T_{x_\gamma}M}$ is continuous. 
By continuity of $x_\gamma$, that $\mu_n \circ x^{-1}_\gamma \to \mu \circ x^{-1}_\gamma(\cdot)$ in the Wasserstein topology. Let $\eps>0$ choose $N\in \N$ such that 
\begin{align*}
 \gamma_2(\mu_n\circ x^{-1}_\gamma(\cdot),\mu\circ x^{-1}_\gamma(\cdot))< \eps
\end{align*}
for all $n\ge N$.  Hence for all $n\ge N$ we get
\begin{align*}
      &\abs{\int_M \skalarq{D_If(\mu_n \circ x^{-1}_{\gamma}(\cdot))(x_\gamma(u)),\overset{\cdot}{x}_\gamma}_{T_{x_\gamma}M}\mu_n(\dd u)-  \int_M \skalarq{D_If(\mu \circ x^{-1}_{\gamma}(\cdot))(x_\gamma(u)),\overset{\cdot}{x}_\gamma}_{T_{x_\gamma}M}\mu(\dd u)}\\
      =& \abs{  \int_M \skalarq{D_If(\mu_n \circ x^{-1}_{\gamma}(\cdot))(x_\gamma(u))-D_If(\mu \circ x^{-1}_{\gamma}(\cdot))(x_\gamma(u)),\overset{\cdot}{x}_\gamma}_{T_{x_\gamma}M}\mu_n(\dd u)}\\
      +& \abs{\int_M \skalarq{D_If(\mu \circ x^{-1}_{\gamma}(\cdot))(x_\gamma(u)),\overset{\cdot}{x}_\gamma}_{T_{x_\gamma}M}\mu_n(\dd u)-\int_M \skalarq{D_If(\mu \circ x^{-1}_{\gamma}(\cdot))(x_\gamma(u)),\overset{\cdot}{x}_\gamma}_{T_{x_\gamma}M}\mu(\dd u)}
\end{align*}
The first term can be estimated by the choice of $n\ge N$ and Lipschitz continuity of $D_IF$.

Due to weak convergence one can furthermore choose $n\gg 1$ such that the second term can also be estimated by $\eps$.
We can finally derive the formula.
\begin{align*}
    &\lim_{\eps \searrow 0} \frac{f(\mu\circ x^{-1}_{\theta+\eps})(\cdot)-f(\mu \circ x^{-1}_\theta)(\cdot)}{\eps} \\
    =& \lim_{\eps \searrow 0}\eps^{-1} \int^{\theta+\eps}_\theta \int_M \skalarq{D_If(\mu \circ x^{-1}_{\gamma}(\cdot))(x_\gamma(u)),\overset{\cdot}{x}_\gamma}_{T_{x_\gamma}M}\mu(\dd u)\mu(\dd u) \dd \gamma\\
    =& \int_M \skalarq{D_If(\mu \circ x^{-1}_{\theta}(\cdot))(x_\theta(u)),\overset{\cdot}{x}_\theta}_{T_{x_\theta}M}\mu(\dd u)
\end{align*}
 yielding the result.
It only remains to prove that we can choose good sequence $\mu^n=\frac{1}{n} \sum^n_{i=1} \delta_{u_i}$- Let $X_1.X_2,\dots$ be iid random variables such that $X_i\sim \mu$. Then we have 
\begin{align*}
    \frac{1}{n}\sum^n_{i=1} f(X_i) \to \E(f(X_1))=\int_M f(u) \mu(\dd u) \fastsicher
\end{align*}
for all $f:M\to \R$ such that $\E(\abs{f(X_1)})<\infty$ now take a weak convergence determining family (countable) $(f_k)_{k\ge 0}$. Let $\omega \in \Omega\setminus N$ such that 
\begin{align*}
    \frac{1}{n}\sum^n_{i=1} f_k(X_i(\omega)) \to& \E(f(X_1))=\int_M f(u) \mu(\dd u)\\
    \frac{1}{n}\sum^n_{i=1} d^2_M(x_\gamma(X_i(\omega),x_0) \to& \E(d^2_M(x_\gamma(X_1),x_0))\\
     \frac{1}{n}\sum^n_{i=1} d^2_M(X_i(\omega),x_0) \to& \E(d^2_M(X_1,x_0))\\
\end{align*}
for all $k\ge 0$ and some $x_0\in M$,
which yields the desired result. 

     \end{proof}
\begin{bemerkung}
    Note that the proof can be carried out in exactly the same way for $M=\Rd$, if we assume $\int_{\Rd} \abs{x_\gamma(u)}^2\mu(\dd u)<\infty$ and $\int_{\Rd} \abs{\overset{\cdot}{x_\gamma(u)}}^2 \mu(\dd u)<\infty $.
\end{bemerkung}

     \begin{theorem}
  The solution $x$ to \eqref{Wechselwirkung}  is intrinsically differentiable. For $M=\R^d$ the intrinsic derivative is a solution to the following equation for $\Psi\in \Gamma(\R^N)$:
    \begin{align*}
        d D^\psi_I x_\mu(u,t) &= \nabla F(x_\mu(u,t),\circ \dd t )D^\psi_I x_\mu(u,t) \\
        &+ \int_M D_I F(x_\mu(u,t),\circ \dd t)(x_\mu(r,t))\nabla x_\mu(r,t)\psi(r) \mu(\dd r) \\
        &+ \int_M  D_I F(x_\mu(u,t),\circ \dd t ) (x_\mu(r,t))D^\psi_Ix_\mu(r,t) \mu(\dd r)
    \end{align*}
\end{theorem}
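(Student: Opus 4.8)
The plan is to pass to the Euclidean picture via the Nash embedding, differentiate the SDE with interaction in the direction of the perturbation, identify the limit as the solution of a linear \emph{nonlocal} SDE, and close the loop with a Grönwall estimate. Fix $\psi\in\Gamma(TM)$ and extend it via $\iota\colon M\to\iota(M)\subset\R^N$ to a smooth compactly supported field on $\R^N$; by Lemma \ref{GlatteErw} all coefficients of \eqref{Wechselwirkung} enlarge to fields in $\ccc^{\infty,\infty,1}(\R^N\times\PP_2(\R^N),\R^N)$, and the solution stays in $\iota(M)$. Let $\Phi^\psi_\eps$ be the flow of $\psi$ and $\mu^\eps:=\mu\circ(\Phi^\psi_\eps)^{-1}$. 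The object to be differentiated is, for fixed $u,t$, the functional $\mu\mapsto x_\mu(u,t)$ with values in $M$, so we study $\tfrac{d}{d\eps}\big|_{\eps=0}x_{\mu^\eps}(u,t)$. The relevant bookkeeping fact is that the empirical measure perturbs according to
\begin{align*}
\mu^\eps_t = \mu^\eps\circ x_{\mu^\eps}(\cdot,t)^{-1} = \mu\circ\big(x_{\mu^\eps}(\Phi^\psi_\eps(\cdot),t)\big)^{-1},
\end{align*}
so that the particle labels are themselves transported by $\Phi^\psi_\eps$.

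First I would record the a priori bounds. Corollary \ref{Lipschitz} gives $\E\sup_{t\le T}d(x_{\mu^\eps}(u,t),x_\mu(u,t))^2\le C\,\gamma_2(\mu^\eps,\mu)^2\le C'\eps^2$ uniformly in $u\in M$, and similarly $\E\sup_{t\le T}\gamma_2(\mu^\eps_t,\mu_t)^2\le C'\eps^2$ by Lemma \ref{Massabsch}. Hence the difference quotients $\eps^{-1}\big(x_{\mu^\eps}(u,\cdot)-x_\mu(u,\cdot)\big)$ are bounded in $\lp{2}(\Omega,\ccc([0,T],\R^N))$ uniformly in $u$ and in $\lp{2}(\Omega,\ccc([0,T],\lp{2}(\mu)))$; this is what makes the interchange of $\tfrac{d}{d\eps}$ with the stochastic integrals and with the $\mu$-integrations legitimate. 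Passing to the Itô form of \eqref{Wechselwirkung} (which replaces $V_0$ by $\tilde V_0$ and stays in $\ccc^{\infty,\infty,1}$), I would then differentiate the equation for $x_{\mu^\eps}(u,\cdot)$ in $\eps$. Since the base point $u$ is held fixed, the candidate $w(u,t):=D^\psi_I x_\mu(u,t)$ has $w(u,0)=0$; the chain rule Lemma \ref{Kettenregel}, applied to the parametrised family $\mu^\eps_t=\mu\circ(x_{\mu^\eps}(\Phi^\psi_\eps(\cdot),t))^{-1}$ with moving base point, produces the measure term with $\tfrac{d}{d\eps}\big|_{\eps=0}x_{\mu^\eps}(\Phi^\psi_\eps(r),t)=w(r,t)+\nabla x_\mu(r,t)\psi(r)$ inside the $\mu$-integral (joint differentiability in $\eps$ and in the base point being inherited from smoothness of the stochastic flow together with the Lipschitz/smooth dependence on $\mu$ built in the Picard construction of Theorem \ref{wohlgestellt}). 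This yields exactly the three-term equation in the statement.

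Next I would establish well-posedness of that linear nonlocal SDE by Picard iteration in $\lp{2}(\Omega,\ccc([0,T]))$ endowed with the norm $\big(\sup_u\E\sup_t|\cdot|^2+\E\sup_t\int_M|\cdot|^2\mu\big)^{1/2}$; the contraction comes from boundedness of $\nabla F$ and $D_IF$ (a consequence of the $\ccc^{\infty,\infty,1}$ assumption on the enlarged fields and compactness) together with Burkholder--Davis--Gundy. Then convergence of the difference quotient to $w$ is a Grönwall argument: writing the equation satisfied by $\eps^{-1}(x_{\mu^\eps}(u,\cdot)-x_\mu(u,\cdot))-w(u,\cdot)$, one Taylor-expands the coefficients in both the space and the measure argument with integral remainder, estimates the remainder by the a priori bounds of the second step and the Lipschitz continuity of $\nabla F$ and $D_IF$, and closes because the same inequality couples the pointwise $\lp{2}(\Omega)$-norm and the $\lp{2}(\mu)$-averaged norm. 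Finally, $D^\psi_I x_\mu(u,t):=w(u,t)$ exists; linearity of $\psi\mapsto D^\psi_I x_\mu(u,t)$ and continuity in $\lp{2}(\mu,M,TM)$ follow from the linear and continuous dependence of the affine SDE on its initial datum $\psi$, giving intrinsic differentiability in the sense of the definitions; transferring back through the isometry $\iota$ and $d\iota$ yields the manifold statement, the displayed SDE being the $M=\R^d$ (equivalently, local-coordinate) form. The main obstacle is the nonlocality: the derivative at $u$ feeds back through an average over \emph{all} labels $r$, so the fixed-point space, the a priori estimates, and the Grönwall inequality must simultaneously control a pointwise norm and a $\mu$-averaged norm, and the Taylor remainders must be shown to be $o(\eps)$ in both norms at once; a secondary technical point is the careful passage between the Stratonovich form of \eqref{Wechselwirkung} and its Itô form before differentiating, which is admissible precisely because Lemma \ref{GlatteErw} keeps every coefficient in $\ccc^{\infty,\infty,1}$.
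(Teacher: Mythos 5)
Your argument is sound and reaches the stated equation, but its key mechanism differs from the paper's. You construct the candidate derivative $w(u,t)$ first, as the solution of the linear nonlocal (first-variation) SDE, prove its well-posedness by Picard iteration in a mixed pointwise/$\mu$-averaged norm, and then identify the limit of the difference quotients by Taylor-expanding the coefficients and closing with Grönwall, so that the remainders are $o(\eps)$ in both norms simultaneously. The paper never solves the limit equation a priori: it works directly with the two difference quotients $\zeta(u,\eps,t)=\eps^{-1}(x_{\mu_\eps}(u,t)-x_\mu(u,t))$ and $\tilde\zeta$ (along the $\Phi^\psi_\eps$-transported labels), proves by the same embedding, chain-rule and Grönwall machinery the joint estimate $\E\bigl(\sup_{t\le T}\abs{\zeta(u,\eps,t)-\zeta(v,\delta,t)}^p\bigr)\le C(\abs{u-v}^p+\abs{\eps-\delta}^p)$ for $\eps,\delta>0$, and then invokes the Kolmogorov continuity theorem to extend the field $\zeta$ continuously to $\eps=0$; intrinsic differentiability is thus obtained as continuity of a modification at $\eps=0$, and the displayed SDE is read off afterwards using $\tilde\zeta\to\nabla x_\mu(u,t)\psi(u)$. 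The shared ingredients are the Nash embedding with the smooth enlargement of Lemma \ref{GlatteErw}, the a priori bounds from Lemma \ref{Massabsch} and Corollary \ref{Lipschitz} (also for $\nabla x_\mu$), and the chain rule Lemma \ref{Kettenregel} to handle the transported labels, which you account for correctly via $\tfrac{d}{d\eps}\big|_{\eps=0}x_{\mu^\eps}(\Phi^\psi_\eps(r),t)=w(r,t)+\nabla x_\mu(r,t)\psi(r)$. What each route buys: the paper's Kolmogorov argument avoids both the separate well-posedness step for the linear equation and the $o(\eps)$ remainder analysis, at the price of the two-parameter Hölder estimate; your route is the classical variational one, which requires those extra steps but yields the limit equation, the $\lp{p}$ convergence of the difference quotients, and the linearity and $\lp{2}(\mu)$-continuity in $\psi$ (needed for intrinsic differentiability in the paper's definition, and not spelled out there) in a more transparent way. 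Just be sure, when you execute the Grönwall identification, that the coupled pointwise and $\mu$-averaged remainder terms are estimated uniformly in $u$, exactly the obstacle you flag.
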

 \begin{proof}
Before we start with the proof note that for $M=\Rd$ Corollary \ref{Lipschitz} also holds for $\nabla x_\mu$. 
     We will prove the statement on $\R^d$ which is enough by the Nash imbedding and Lemma \ref{GlatteErw}.
     Note that the imbedding can be done in a meaningful way since $\iota^n:M^n \to \R^{Nn}
     $ is an isometric embedding whenever $\iota: M \to \R^N $ is a Nash imbedding which is important when $V_i\in \mathcal{A}$.
     The proof is similar as in \cite{wang2021image} but with slight modifications and thus will be carried out here for convenience. Note that the derivative process $\nabla x_\mu$ has the properties of Lemma \ref{Lipschitz}.
We will assume w.l.o.g   that $D_IV_i $ is Lipschitz with respect to to all arguments, this is not a restriction since the extended Vector fields will have bounded support with respect to spatial variables. 
Let $\psi \in \ccc_c^{\infty}(\R^d, \R^d) $ and define $\mu_\eps = \mu\circ \Phi^\psi_\eps(\cdot)^{-1} $, moreover define $\zeta(u,\eps,t)= \frac{x_{\mu_\eps}(u,t)-x_\mu(u,t)}{\eps}$ and $\Tilde{\zeta}(u,\eps,t)=\frac{x_{\mu_\eps}(\Phi^F_\eps(u),t)-x_\mu(u,t)}{\eps}$ our goal is to show 
\begin{align}\label{Kolmogorov}
    \E(\sup_{0\le t \le T}\abs{\zeta(u,\eps,t)-\zeta(v,\delta,t)}^p)\le C(\abs{\eps-\delta}^p +\abs{u-v}^p) 
\end{align}
for all $p\ge 2$. Now define $\eta(u,\eps,r,t)= x_\mu(u,t)+r(x_{\mu_\eps}(\Phi^F_\eps(u),t)-x_\mu(u,t))$, by Lemma \ref{Kettenregel} 
\begin{align}\label{Kette}\begin{split}
    &\frac{d}{dr} V_i(u,\mu\circ (\eta(\cdot,r))^{-1})\\
    =& \int_{\R^d} D_IV_i(u,\mu\circ (\eta(\cdot,r))^{-1})(\eta(v,\eps,r,t))(x_{\mu_\eps}(\Phi_\eps^F(v))-x_\mu(v,t))\mu(\dd v)\\
    =& \eps \int_{\R^d} D_IV_i(u,\mu\circ (\eta(\cdot,r))^{-1})(\eta(v,\eps,r,t))(\zeta(v,\eps,t)-\Tilde{\zeta}(v,\eps,t))\mu(\dd v)
    \end{split} 
\end{align}
by denoting $\Tilde{\eta}(u,\eps,\theta,t)= \theta x_\mu(u,t)+(1-\theta)x_{\mu_\eps}(u,t)$ we get 
\begin{align*}
    &\frac{V_i(x_{\mu_\eps}(u,t),\mu^\eps_t)-V_i(x_\mu(u,t),\mu_t)}{\eps}\\
    =& \frac{1}{\eps} \int_0^1 \frac{d}{d\theta} V_i(\Tilde{\eta}(u,\eps,\theta,t),\mu\circ (\eta(\cdot,\eps,\theta,t))) \dd \theta\\
    =& \int_0^1 \nabla V_i(\Tilde{\eta}(u,\eps,\theta,t),\mu \circ (\eta(\cdot,\eps,\theta,t))) \zeta(u,\eps,t)\\
    +&\int_{\R^d} D_I V_i(\Tilde{\eta}(u,\eps,\theta,t),\mu\circ \eta(\cdot,\eps,\theta,t))(\eta(v,\eps,\theta,t)) \zeta(v,\eps,t) \mu(\dd v)\\
    +& \int_{\R^d} D_I V_i(\Tilde{\eta}(u,\eps,\theta,t),\mu \circ (\eta(\cdot,\eps,\theta,t))^{-1})(\eta(v,\eps,\theta,t))\Tilde{\zeta}(v,\eps,t) \mu(\dd v)\dd \theta.
\end{align*}
In order to show \eqref{Kolmogorov} we need estimates for the Wasserstein distance of our perturbed initial measures. Let w.l.o.g $\eps>\delta>0$ by boundedness of $\psi$ 
\begin{align*}
\E(\gamma_2(\mu_\eps,\mu_\delta)^p)&\le C \gamma_2(\mu_\eps,\mu_\delta)^p\\
&\le \left(\int_{\R^d}  \abs{\int^\eps_\delta \psi(\Phi^\psi_s(u))}^2 \mu(\dd u)\right)^{\frac{p}{2}}\\ &\le C \abs{\eps-\delta}^{p} 
\end{align*}
this estimate works in exactly the same way for $\delta>\eps$. Now note that 
\begin{align*}
    \zeta(u,\eps,t)&= \frac{1}{\eps}\Big( \int_0^t \Tilde{V}_0(x_{\mu_\eps}(u,s),\mu^\eps_s)-\Tilde{V}_0(x_{\mu}(u,s),\mu_s) \dd s\\
  &+  \sum^n_{i=1} \int_0^t V_i(x_{\mu_\eps}(u,s),\mu^\eps_s)-V_i(x_{\mu}(u,s),\mu_s) \dd B^i_s\Big)
\end{align*}
therefore 
\begin{align*}
    \E(\sup_{0\le t\le T} \abs{\zeta(u,\eps,t)}^p)&\le \frac{C}{\eps^p} \E\Big(\int_0^T \abs{x_{\mu_\eps}(u,t)-x_\mu(u,t)}^p + \gamma_2(\mu_\eps,\mu)^p \dd t \Big)\\
    &\le  \frac{C}{\eps^p} \E(\gamma_2(\mu_\eps,\mu)^p)\le C
\end{align*} 
in a similar way one can show 
\begin{align*}
    \E(\sup_{0\le t\le T}\Tilde{\zeta}   (u,\eps,t)^p)\le C.
\end{align*}
Furthermore we have 
\begin{align*}
    &\zeta(u,\eps,T)-\zeta(v,\delta,T)\\
    =& u-v\\
    =&\int_0^1 \int_0^T\nabla \Tilde{V}_0(\Tilde{\eta}(u,\eps,\theta,t),\mu \circ (\eta(\cdot,\eps,\theta,t))) \zeta(u,\eps,t)\\
    -&\Tilde{V}_0(\Tilde{\eta}(v,\delta,\theta,t),\mu \circ (\eta(\cdot,\delta,\theta,t))) \zeta(v,\delta,t) \dd t\\
    +&\int_0^T\int_{\R^d} D_I \Tilde{V}_0(\Tilde{\eta}(u,\eps,\theta,t),\mu\circ \eta(\cdot,\eps,\theta,t))(\eta(q,\eps,\theta,t)) \zeta(q,\eps,t) \mu(\dd q)\\
    -&\int_{\R^d} D_I \Tilde{V}_0(\Tilde{\eta}(v,\delta,\theta,t),\mu\circ \eta(\cdot,\delta,\theta,t))(\eta(q,\delta,\theta,t)) \zeta(q,\delta,t) \mu(\dd q) \dd t\\
    +& \int_0^T\int_{\R^d} D_I \Tilde{V}_0(\Tilde{\eta}(u,\eps,\theta,t),\mu \circ (\eta(\cdot,\eps,\theta,t))^{-1})(\eta(q,\eps,\theta,t))\Tilde{\zeta}(q,\eps,t) \mu(\dd q) \\
    -&\int_{\R^d} D_I \Tilde{V}_0(\Tilde{\eta}(v,\eps,\theta,t),\mu \circ (\eta(\cdot,\eps,\theta,t))^{-1})(\eta(q,\eps,\theta,t))\Tilde{\zeta}(q,\eps,t) \mu(\dd q)     \dd t  \dd \theta\\
    =&  \sum^n_{i=1}\int_0^1\int_0^T \nabla V_i(\Tilde{\eta}(u,\eps,\theta,t),\mu \circ (\eta(\cdot,\eps,\theta,t))) \zeta(u,\eps,t)\\
    -&V_i(\Tilde{\eta}(v,\delta,\theta,t),\mu \circ (\eta(\cdot,\delta,\theta,t))) \zeta(v,\delta,t) \dd B^i_t\\
    +&\int_0^T\int_{\R^d} D_I V_i(\Tilde{\eta}(u,\eps,\theta,t),\mu\circ \eta(\cdot,\eps,\theta,t))(\eta(q,\eps,\theta,t)) \zeta(q,\eps,t) \mu(\dd q)\\
    -&\int_{\R^d} D_I V_i(\Tilde{\eta}(v,\delta,\theta,t),\mu\circ \eta(\cdot,\delta,\theta,t))(\eta(q,\delta,\theta,t)) \zeta(q,\delta,t) \mu(\dd q)\dd B^i_t\\
    +& \int_0^T\int_{\R^d} D_I V_i(\Tilde{\eta}(u,\eps,\theta,t),\mu \circ (\eta(\cdot,\eps,\theta,t))^{-1})(\eta(q,\eps,\theta,t))\Tilde{\zeta}(q,\eps,t) \mu(\dd q)\dd B^i_t\\
    -&\int_{\R^d} D_I V_i(\Tilde{\eta}(v,\eps,\theta,t),\mu \circ (\eta(\cdot,\delta,\theta,t))^{-1})(\eta(q,\delta,\theta,t))\Tilde{\zeta}(q,\delta,t) \mu(\dd q) \dd B^i_t\dd\theta
\end{align*}
In order to estimate these terms by the Lipschitz property of $D_IV_i$ for all $i=1,\dots,n$ and $D_I\Tilde{V}_0$. it is enough to consider the following estimates. 
\begin{align*}
    &\E(\sup_{0\le t \le T}\abs{\Tilde{\eta}(u,\eps,\theta,t)-\Tilde{\eta}(v,\delta,\theta,t)}^p)\\ 
    \le& C\theta^p\E(\sup_{0\le t \le T}\abs{x_\mu(u,t)-x_\mu(v,t)}^p) +C(1-\theta)^p \E(\gamma_2(\mu_\eps,\mu_\delta)^p))\le C(\abs{u-v}^p + \abs{\eps-\delta}^p)
\end{align*}
for all $u,v\in \R^d$ and $\eps,\delta>0$. Furthermore
\begin{align*}
    \int_{\R^d}E(\sup_{0\le t\le T}\abs{\eta(q,\eps,\theta,t)
  -  \eta(q,\eps,\theta,t)}^{  p}) \mu(\dd q)\le C(\theta^p)(\abs{\eps-\delta}^p) 
\end{align*}
moreover 
\begin{align*}
    &\E\left(\sup_{0\le t\le T}\gamma^p_2(\mu\circ \eta(\cdot,\eps,\theta,t)^{-1},\mu\circ\eta(\cdot,\delta,\theta,t)^{-1})\right) \\
    =& \E\left(\left(\sup_{0\le t \le T}\int_{\R^d} \abs{\eta(v,\eps,\theta,t)-\eta(v,\delta,\theta,t)}^{2p}\mu(\dd v)\right)^{\frac{1}{2}}\right)\\
    \le&  \left( \int_{\R^d}\E\left(\left(\sup_{0\le t \le T} \abs{\eta(v,\eps,\theta,t)-\eta(v,\delta,\theta,t)}^{2p}\mu(\dd v)\right)\right)\mu(\dd v)\right)^{\frac{1}{2}}\\
    \le& C \abs{\eps-\delta}^p
\end{align*}
Now by boundedness and Lipschitzness of $V_i$ for $i=1,\dots,n$ and $\Tilde{V}_0$ and all its derivatives and intrisical derivatives we get, 
\begin{align*}
    &\E(\sup_{0\le t \le T}\abs{\zeta(u,\eps,t)-\zeta(v,\delta,t)}^p)\\
    \le& C\int_0^T \E\Big(\abs{\zeta(u,\eps,t)-\zeta(v,\delta,t)}^p \\
    +&\int_{\R^d} \abs{\zeta(u,\eps,t)-\zeta(u,\delta,t)}^p \mu(\dd u )\\
    +&\int_{\R^d} \abs{\Tilde{\zeta}(u,\eps,t)-\Tilde{\zeta}(u,\delta,t)}^p \mu(\dd u )\Big) +C(\abs{\eps-\delta}^p + \abs{u-v}^p)
\end{align*}
from the definition of $\Tilde{\zeta}$ we get 
\begin{align*}
    &\E(\sup_{0\le t\le T}\abs{\Tilde{\zeta}(u,\eps,t)-\Tilde{\zeta}(u,\delta,t)}^p)\\
    =& \int_0^1\E(\sup_{0\le t\le T} \abs{\nabla x_{\mu_\eps}(\Phi^\psi_{r\eps}(u),t)-\nabla x_{\mu_\delta}(\Phi^\psi_{r\delta}(u),t)}^p) \dd r\\
    &\le C \abs{\eps-\delta}^p
\end{align*}
moreover
\begin{align*}
     &\E(\sup_{0\le t \le T}\int_{\R^d}\abs{\zeta(u,\eps,t)-\zeta(u,\delta,t)}^p\mu(\dd u))\\
     \le& C\Big(\int_0^T \E(\int_{\R^d}\abs{\zeta(u,\eps,t)-\zeta(u,\delta,t)}^p\mu(\dd u))\\
     +& \E(\sup_{0\le t\le T}\abs{\Tilde{\zeta}(u,\eps,t)-\Tilde{\zeta}(u,\delta,t)}^p)\\ 
     +& \abs{\eps-\delta}^p
     \Big)
\end{align*}
hence we can finally conclude by means of the Grönwall inequality
\begin{align*}
    \E(\sup_{0\le t \le T}\abs{\zeta(u,\eps,t)-\zeta(v,\delta,t)}^p)\le C(\abs{\eps-\delta}^p+\abs{u-v}^p) 
\end{align*}
by choosing $p\ge 2$ big enough we can conclude with the Kolmogorov continuity theorem that $\zeta$ is continuously extendable to $\eps=0$ yielding the intrinsic differentiability, moreover by
\begin{align*}
    \Tilde{\zeta}(u,\eps,t) \overset{\eps \to 0}{\longrightarrow} \nabla x_\mu(u,t)\psi(u) \fastsicher 
\end{align*}
since 
\begin{align*}
    \Tilde{\zeta}(u,\eps,t)= \int_0^1 \nabla(x_{\mu_\eps}(\Phi_{r\eps}(u))F(\Phi^\psi_{r\eps}(u))) \dd r.  
\end{align*}
In our final step we finalise the proof by observing that 
Let $x$ be the solution to \eqref{Wechselwirkung} and consider the Nash imbedding $\iota: M \to \iota(M)\subset \R^N$ then 
$\iota(x)$ is a solution to the equation in $\R^N$ can be extended to a solution of equations with interactions on $\R^N$ 
\begin{align*}
    \Bar{x}_\mu(u,t)= u+\int_0^t \Bar{\Tilde{V_0}}(\Bar{x}_\mu(u,t),\mu_t)\dd t + \sum^n_{i=1} \Bar{V}_i(x_\mu(u,t),\mu_t) \dd B^i_t
\end{align*}

where $\Bar{V}_i \in \ccc_1^{\infty,1}(\R^N \times \PP_2(\R^N)) $ for all $i=1,\dots,n$ as well as $\Bar{\Tilde{V}}_0 \in \ccc_1^{\infty,1}(\R^N \times \PP_2(\R^N))$. Then the theorem follows since for every $F\in \Gamma(TM)$ there exists a $\Bar{F}\in \ccc_c^{\infty}(\R^N,\R^N)$ such that $\Bar{F}_{\vert_{M}}=F $. And thus projecting the equation on the tangent space of the submanifold yields the result. 
    \end{proof}

We will now prove that the solution to $x$ is Malliavin differentiable.
\begin{lemma}\label{Malliavin}
    Let $(y(u,t))_{u\in M,t\ge 0}$ be Malliavin differentiable such that 
    \begin{align}
        \sup_{u\in M}\sup_{s\in [0,t]}\E(\sup_{t\le T}\abs{D_sy(u,t)}^2)<\infty
    \end{align}
    then we get for $F\in \ccc^{1,1}(\PP_2(M))$ and $\mu_t=\mu\circ y^{-1}(\cdot,t)$ 
    \begin{align}
        D_sF(\mu_t)= \int_M \skalarq{D_IF(\mu_t)(y(u,t)),D_sy(u,t)}\mu(\dd u)
    \end{align}
    
\end{lemma}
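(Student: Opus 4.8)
The plan is to mirror the proof of the chain rule, Lemma~\ref{Kettenregel}: approximate $\mu$ by empirical measures, apply the classical finite-dimensional Malliavin chain rule together with the derivative formula of Lemma~\ref{diskreteapprox}, and pass to the limit via closedness of the Malliavin derivative operator. Throughout one uses that $\PP_2(M)$ is compact when $M$ is, so that $F$, $D_IF(\mu)(\cdot)$ and its spatial derivative are bounded uniformly in $\mu$; for $M=\Rd$ one argues after the Nash imbedding and imposes the integrability conditions of the remark after Lemma~\ref{Kettenregel}.

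First I would take i.i.d.\ $M$-valued random variables $X_1,X_2,\dots$ with law $\mu$ on an auxiliary factor independent of the driving Brownian motions, and set $\mu^n=\frac1n\sum_{i=1}^n\delta_{X_i}$, $\mu^n_t:=\mu^n\circ y^{-1}(\cdot,t)=\frac1n\sum_{i=1}^n\delta_{y(X_i,t)}$. By Lemma~\ref{diskreteapprox} the map $(u_1,\dots,u_n)\mapsto F\big(\tfrac1n\sum_{i=1}^n\delta_{u_i}\big)$ is $\ccc^1$ on $\otimes_{i=1}^n M$ with $u_i$-gradient $\tfrac1n D_IF\big(\tfrac1n\sum_j\delta_{u_j}\big)(u_i)$; after the Nash imbedding it extends to a $\ccc^1$ function on a Euclidean neighbourhood of $\iota(M)^n$, so the standard chain rule for the Malliavin derivative of a $\ccc^1$ function of finitely many elements of $\mathbb{D}^{1,2}$ gives, for fixed values of the $X_i$,
\begin{align*}
    D_sF(\mu^n_t)=\frac1n\sum_{i=1}^n\skalarq{D_IF(\mu^n_t)(y(X_i,t)),D_sy(X_i,t)}=\int_M\skalarq{D_IF(\mu^n_t)(y(u,t)),D_sy(u,t)}\mu^n(\dd u).
\end{align*}

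Then I would let $n\to\infty$. Conditionally on the Brownian motion the variables $y(X_1,t),y(X_2,t),\dots$ are i.i.d.\ with law $\mu_t$, hence $\gamma_2(\mu^n_t,\mu_t)\to 0$ almost surely, and by Lipschitz continuity and boundedness of $F$ one obtains $F(\mu^n_t)\to F(\mu_t)$ in $\lp{2}(\Omega)$. For the right-hand side I would compare $D_sF(\mu^n_t)$ with $\int_M\skalarq{D_IF(\mu_t)(y(u,t)),D_sy(u,t)}\mu(\dd u)$ in $\lp{2}(\Omega;\lp{2}([0,T]))$ in two steps. Replacing $D_IF(\mu^n_t)$ by $D_IF(\mu_t)$ costs at most $C\gamma_2(\mu^n_t,\mu_t)\big(\tfrac1n\sum_i|D_sy(X_i,t)|^2\big)^{1/2}$ by Lipschitz continuity of $D_IF$ in the measure argument, and this tends to $0$ in $\lp{2}(\Omega;\lp{2}([0,T]))$ by the hypothesis $\sup_u\sup_s\E(\sup_{t\le T}|D_sy(u,t)|^2)<\infty$ together with dominated convergence; the remaining difference $\tfrac1n\sum_i\skalarq{D_IF(\mu_t)(y(X_i,t)),D_sy(X_i,t)}-\int_M\skalarq{D_IF(\mu_t)(y(u,t)),D_sy(u,t)}\mu(\dd u)$ is the fluctuation of an empirical mean of i.i.d.\ terms around its conditional expectation, whose conditional variance integrated over $s\in[0,T]$ is at most $\tfrac{T}{n}\norm{D_IF}_\infty^2\sup_u\sup_s\E|D_sy(u,t)|^2\to 0$. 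Hence the Malliavin derivative of $F(\mu^n_t)$ converges in $\lp{2}(\Omega;\lp{2}([0,T]))$ to the claimed expression, and closedness of the Malliavin derivative yields that $F(\mu_t)\in\mathbb{D}^{1,2}$ with the asserted derivative.

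The hard part is the limit passage on the right-hand side: in contrast with Lemma~\ref{Kettenregel}, where the velocity field is $\omega$-wise fixed and continuous, $D_sy(u,t)$ is a genuinely random object — a process in the Cameron--Martin variable $s$ — and the hypothesis supplies only a uniform $\lp{2}$-bound in $(u,s)$, not continuity in $u$; it is exactly this bound that makes the two estimates above (the Wasserstein--Lipschitz error and the $\lp{2}$ law of large numbers) work, replacing the continuity/dominated-convergence arguments of the deterministic chain rule. A subordinate point is justifying the finite-dimensional Malliavin chain rule on the manifold, which — as elsewhere in the paper — is done by Nash-imbedding and smoothly extending $(u_1,\dots,u_n)\mapsto F(\tfrac1n\sum\delta_{u_i})$ to a Euclidean neighbourhood of $\iota(M)^n$.
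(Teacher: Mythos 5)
Your proof is correct and follows essentially the same route as the paper: empirical-measure approximation, the finite-dimensional chain rule via Lemma~\ref{diskreteapprox} after Nash imbedding, and closedness of the Malliavin derivative, with the limit passage handled as in Lemma~\ref{Kettenregel}. The only (harmless) difference is that you keep the atoms as genuinely random i.i.d.\ samples and identify the limit through a conditional $\lp{2}$ law of large numbers, whereas the paper works with a fixed good sequence of atoms and refers back to the chain-rule proof for this step; your version spells out that identification in more detail.
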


    \begin{proof}
   It is enough to prove the result on a submanifold of $\R^N$ by the Nash imbedding. 
        Let $\mu^n= \frac{1}{n}\sum^n_{i=1} \delta_{u_i}$ then by simple chain rule we get 
        \begin{align}\label{formel}
            D_sF(\mu_t^n)= \int_M\skalarq{D_IF(\mu^n_t)(y(u,t)),D_sy(u,t)}_{T_{y(u,t)}M} \mu^n(\dd u)
        \end{align}
        Hence 
        \begin{align*}
            \sup_{n\in \N} \E(\abs{D_sF(\mu^n_t)}^2)<\infty
        \end{align*}
        Since 
        \begin{align*}
            \E(\int_0^T \abs{F(\mu^n_t)-F(\mu_t))}^2 \dd t ) \to  0
        \end{align*}

        it follows from closedness of the Malliavin derivative that $D_sF(\mu_t)$ exists and is the limit of $D_sF(\mu^n_t)$. Now since $F\in \ccc_1^{1}(\PP_2(M))$ one can derive from \eqref{formel} the desired formula, in a similar way as in the proof of the chain rule Lemma \ref{Kettenregel}. Note that a similar idea has already been provided in \cite{djordjevic2021clark} and a similar formula for interacting kernel coefficients has been established.
    \end{proof}

    Now we can prove Malliavin differentiability of \eqref{Wechselwirkung}, the main idea of the proof is motivated from Lemma 3.13 in \cite{nualart2006malliavin}.
    \begin{lemma}

 The solution to \eqref{Wechselwirkung} is Malliavin differentiable and the derivative satisfies 

\begin{align*}
    \sup_{u\in M} \sup_{r\in [0,t]} \E(\sup_{ r\le s \le t} \abs{D_r x(u,s)}^p)<\infty
\end{align*}
for $ p\ge 2$.

\end{lemma}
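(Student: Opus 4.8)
The plan is to transfer everything to $\R^N$ via the Nash imbedding $\iota$ and the smooth enlargement from Lemma \ref{GlatteErw}, so that it suffices to prove the statement for $\Bar{x}_\mu$ solving the $\R^N$-valued equation with coefficients $\Bar{\Tilde{V}}_0,\Bar{V}_1,\dots,\Bar{V}_n\in\ccc^{\infty,\infty,1}(\R^N\times\PP_2(\R^N),\R^N)$, which moreover have compact support in the spatial variable; the claim on $M$ then follows by pulling back with $\iota^{-1}$ and projecting onto $TM$, exactly as in the proof of the intrinsic differentiability theorem above. On $\R^N$ I would run the Picard iteration $(\Bar{x}^n)_{n\in\N}$ from the proof of Theorem \ref{wohlgestellt}, where $\Bar{x}^n(u,\cdot)$ solves the SDE with frozen-measure coefficients $\Bar{V}_i(\cdot,\mu^{n-1}_s)$, $\mu^{n-1}_s=\mu\circ(\Bar{x}^{n-1}(\cdot,s))^{-1}$, and establish Malliavin differentiability of each iterate by induction on $n$. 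The base case $n=0$ is classical Malliavin differentiability of an SDE with smooth bounded coefficients. For the inductive step, the coefficient $s\mapsto\Bar{V}_i(y,\mu^{n-1}_s)$ is Malliavin differentiable for each fixed $y$ by Lemma \ref{Malliavin} applied to $F=\Bar{V}_i(y,\cdot)$ together with the inductive hypothesis that $\Bar{x}^{n-1}(v,\cdot)\in\mathbb{D}^{1,p}$ uniformly in $v$; standard SDE Malliavin calculus then gives $\Bar{x}^n(u,\cdot)\in\mathbb{D}^{1,p}$ and a linear equation for $D^j_r\Bar{x}^n(u,\cdot)$ (the derivative in the direction of $B^j$).

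Differentiating the $n$-th Picard equation yields, for $r\le s$,
\begin{align*}
 D^j_r \Bar{x}^n(u,s) &= \Bar{V}_j\big(\Bar{x}^n(u,r),\mu^{n-1}_r\big) + \int_r^s \nabla\Bar{\Tilde{V}}_0\big(\Bar{x}^n(u,\tau),\mu^{n-1}_\tau\big)\,D^j_r \Bar{x}^n(u,\tau)\,\dd\tau \\
 &\quad + \int_r^s \int_{\R^N} D_I\Bar{\Tilde{V}}_0\big(\Bar{x}^n(u,\tau),\mu^{n-1}_\tau\big)\big(\Bar{x}^{n-1}(v,\tau)\big)\,D^j_r \Bar{x}^{n-1}(v,\tau)\,\mu(\dd v)\,\dd\tau \\
 &\quad + \sum_{i=1}^n \int_r^s \Big(\nabla\Bar{V}_i\big(\Bar{x}^n(u,\tau),\mu^{n-1}_\tau\big)\,D^j_r \Bar{x}^n(u,\tau) \\
 &\qquad\qquad\quad + \int_{\R^N} D_I\Bar{V}_i\big(\Bar{x}^n(u,\tau),\mu^{n-1}_\tau\big)\big(\Bar{x}^{n-1}(v,\tau)\big)\,D^j_r \Bar{x}^{n-1}(v,\tau)\,\mu(\dd v)\Big)\,\dd B^i_\tau .
\end{align*}
This is a linear SDE for $D^j_r\Bar{x}^n(u,\cdot)$ whose source term is built from the \emph{previous} iterate's derivative evaluated at all points $v$. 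Since $\Bar{V}_i$, $\nabla\Bar{V}_i$, $D_I\Bar{V}_i$ and the analogous objects for $\Bar{\Tilde{V}}_0$ are bounded (compact spatial support, Lipschitz in the measure), the Burkholder--Davis--Gundy and Hölder inequalities give
\begin{align*}
 \E\big(\sup_{r\le s\le t}|D^j_r \Bar{x}^n(u,s)|^p\big) &\le C + C\int_r^t \E\big(\sup_{r\le\sigma\le\tau}|D^j_r \Bar{x}^n(u,\sigma)|^p\big)\,\dd\tau \\
 &\quad + C\int_r^t \sup_{w\in\R^N}\E\big(\sup_{r\le\sigma\le\tau}|D^j_r \Bar{x}^{n-1}(w,\sigma)|^p\big)\,\dd\tau ,
\end{align*}
with $C$ independent of $u$, $r$ and $n$. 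Gronwall's inequality propagates a bound $K_n:=\sup_u\sup_r\E(\sup_{r\le s\le t}|D^j_r\Bar{x}^n(u,s)|^p)<\infty$, and a second Gronwall argument in $n$, identical to the contraction step in the proof of Theorem \ref{wohlgestellt}, shows the $K_n$ are bounded uniformly in $n$; the same estimates show $(D^j_r\Bar{x}^n(u,\cdot))_n$ is Cauchy in $L^p(\Omega;\ccc([r,t],\R^N))$.

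Finally, since $\Bar{x}^n(u,t)\to\Bar{x}_\mu(u,t)$ in $L^p$ and $\sup_n\|\Bar{x}^n(u,t)\|_{\mathbb{D}^{1,p}}<\infty$, closedness of the Malliavin operator yields $\Bar{x}_\mu(u,t)\in\mathbb{D}^{1,p}$ with $D^j_r\Bar{x}_\mu(u,t)=\lim_n D^j_r\Bar{x}^n(u,t)$, the limit solving the evident analogue of the linear equation above with $\mu^{n-1}_\tau$ replaced by $\mu_\tau$ and $\Bar{x}^{n-1}$ by $\Bar{x}_\mu$; the uniform bound passes to the limit, and pulling back by $\iota^{-1}$ gives the stated estimate for $x$. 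I expect the main obstacle to be the measure term: the equation for $D_r\Bar{x}^n(u,\cdot)$ is forced by $D_r\Bar{x}^{n-1}$ evaluated at \emph{every} starting point $v$ (and in the limit the equation even becomes self-coupled over all starting points through the $\mu(\dd v)$-integral), so one cannot treat $u$ as a mere parameter in a single Gronwall estimate and must instead carry through a bound that is uniform over the spatial argument. Making this work requires first checking — via Lemma \ref{Malliavin} and the Lipschitz-in-$\gamma_2$ continuity of $D_IV_i$ — that $s\mapsto D_r(\mu^{n-1}_s)$ is well-defined and $L^p$-controlled uniformly in the spatial argument, which is exactly what the compact-support enlargement of Lemma \ref{GlatteErw}, the $\ccc^{\infty,\infty,1}$ regularity of the enlarged fields, and Corollary \ref{Lipschitz} for $\nabla x_\mu$ are there to provide.
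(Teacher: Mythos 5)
Your proposal follows essentially the same route as the paper's proof: reduce to $\R^N$ via the Nash imbedding and the enlarged coefficients, run the Picard iteration with frozen measures $\mu^{n-1}_s$, differentiate each iterate using Lemma \ref{Malliavin} for the measure-dependent coefficients, obtain uniform-in-$(u,r,n)$ bounds via Gr\"onwall, and conclude with the closedness criterion for the Malliavin derivative (Lemma 1.5.4 in Nualart). Your write-up is in fact somewhat more explicit than the paper's about the uniformity in the spatial argument and the iteration over $n$, but there is no substantive difference in method.
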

\begin{beweis}
By the Nash imbedding it suffices to prove the result on $\R^N$.
We prove this by picard iterations, consider for $\mu^0_t \equiv \mu$ then we have for $n\ge 1$
\begin{align*}
    x^n(u,t)=u+   \int_0^t\Tilde{V}_0(x^n(u,s),\mu^{n-1}_s) \dd s +\sum^n_{i=1} \int_0^t V_i(x^n(u,s),\mu_s^{n-1}) \dd B^i_s  
\end{align*}
By induction one can see with, by applying Lemma \ref{Malliavin}, that 

\begin{align*}
    D_sx^n(u,t)  &= \sum^n_{i=1}V_i(x^n(u,s),\mu^{n-1}_s) + \int_s^t \nabla \Tilde{V}_0(x(u,r),\mu_r) D_sx(u,r) \dd r  \\
    &+ \int_s^t \int_{\R^N}D_I \Tilde{V}_0 (\mu^{n-1}_r)(x^{n-1}(u,r)) D_s x^{n-1}(u,r)\mu(\dd u) \dd r \\
    &+ \sum^n_{i=1}\int_s^t\nabla V_i(x^n(u,r),\mu^{n-1}_r) D_s x^n(u,r) \dd B^i_r \\
    &+\sum^n_{i=1}\int_s^t \int_{\R^N} D_IV^i(\mu_t^{n-1}) (x^{n-1}(u,r)) D_s x^{n-1}(u,r)\mu_0(\dd u) \dd B^i_r
\end{align*}
Therefore

\begin{align*}
    & \sup_{0\le r \le t}\E(\sup_{r\le s \le T} \abs{D_r x^{n}(u,s)}^2) \\
    &\le C(T,r)(1+ \int_r^T \E((D_r x^{n}(u,\rho))^2) \dd \rho\\
    &+ \int_s^T  \sup_{u\in \Rd}\E(\abs{D_rx^{n-1}(u,t)}^2)   
\end{align*}
by Grönwall's Lemma we get 
\begin{align*}
 \sup_{u\in \Rd} \E(\sup_{r\le s \le T} \abs{D_r x^{n}(u,s))}^2) \le C(1+  \int_s^T  \E(\abs{D_rx^{n-1}(u,s)}^2) \dd s  ) 
\end{align*}
Which guarantees that the $\lp{2}$ norm of the derivatives is bounded in $n$ since 
\begin{align*}
    \E(\sup_{0\le t \le T} \abs{x^n(u,t)-x(u,t)}^2) \to 0
\end{align*}
Lemma 1.5.4 in \cite{nualart2006malliavin} now guarantees that, $x$ is Malliavin differentiable. The result therefore follows by differentiating the solution.
\end{beweis}
\section{Krylov-Veretennikov decomposition}

We first need an Itô formula for the measure valued process befor we can finalise the proof of the Krylov-Veretennikov decomposition. 
\begin{lemma}\label{MassIto}
    Let $F\in \ccc_1^{\infty}(\PP_2(M))$ then 
    \begin{align*}
        F(\mu_t)= F(\mu_0) + \int_0^t D_I^{\Tilde{V}_0(\cdot,\mu_s)}F(\mu_s)\dd s + \sum^n_{i=1} \int_0^t D^{V_i(\cdot,\mu_s)}F(\mu_s)\dd B^{i}_s
    \end{align*}
\end{lemma}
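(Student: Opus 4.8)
The plan is to reduce the Stratonovich Itô formula for $F(\mu_t)$ to an application of the chain rule (Lemma \ref{Kettenregel}) combined with the defining equation \eqref{Wechselwirkung} for the flow $x_\mu(\cdot,t)$. First I would embed everything into $\R^N$ via the Nash embedding $\iota$ and Lemma \ref{GlatteErw}, so that we may work with $\Bar{x}_\mu(\cdot,t)=\iota(x_\mu(\cdot,t))$, the enlarged vector fields $\Bar{V}_i\in\ccc^{\infty,\infty,1}(\R^N\times\PP_2(\R^N),\R^N)$, and the functional $F$ extended to a neighbourhood of $\iota(M)$ in $\PP_2(\R^N)$; since $\Bar{x}_\mu(\cdot,t)$ stays in $\iota(M)$ a.s., the identity on $\PP_2(M)$ follows by restriction. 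In the Euclidean picture the flow solves an ordinary Stratonovich SDE with measure-dependent coefficients, and $\mu_t=\mu\circ\Bar{x}_\mu(\cdot,t)^{-1}$ is a (continuous, finite-variation-in-the-Stratonovich-sense) curve in $\PP_2(\R^N)$ driven by $B$.

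The core computation is the following. Fix an empirical approximation $\mu^m=\frac1m\sum_{j=1}^m\delta_{u_j}$ with $\gamma_2(\mu^m,\mu)\to 0$; then $\mu^m_t=\frac1m\sum_{j=1}^m\delta_{\Bar{x}_{\mu}(u_j,t)}$ — note I must feed the \emph{true} measure flow $\mu_t$, not the empirical one, into the coefficients, so $\Bar{x}_\mu(u_j,\cdot)$ here denotes the coordinates of the already-constructed solution. By Lemma \ref{diskreteapprox} the map $(v_1,\dots,v_m)\mapsto F(\frac1m\sum_j\delta_{v_j})$ is $\ccc^1$ with $\nabla_{v_j}=\frac1m D_IF(\frac1m\sum_j\delta_{v_j})(v_j)$; applying the classical finite-dimensional Stratonovich chain rule to $t\mapsto F(\Bar{x}_\mu(u_1,t),\dots,\Bar{x}_\mu(u_m,t))$ and using $\dd \Bar{x}_\mu(u_j,t)=\Bar{V}_0(\Bar{x}_\mu(u_j,t),\mu_t)\dd t+\sum_i\Bar{V}_i(\Bar{x}_\mu(u_j,t),\mu_t)\circ\dd B^i_t$ gives
\begin{align*}
    F(\mu^m_t)=F(\mu^m_0)+\frac1m\sum_{j=1}^m\Big(&\int_0^t\langle D_IF(\mu^m_s)(\Bar{x}_\mu(u_j,s)),\Bar{V}_0(\Bar{x}_\mu(u_j,s),\mu_s)\rangle\,\dd s\\
    &+\sum_{i=1}^n\int_0^t\langle D_IF(\mu^m_s)(\Bar{x}_\mu(u_j,s)),\Bar{V}_i(\Bar{x}_\mu(u_j,s),\mu_s)\rangle\circ\dd B^i_s\Big),
\end{align*}
which is exactly $F(\mu^m_t)=F(\mu^m_0)+\int_0^t D_I^{V_0(\cdot,\mu_s)}F(\mu^m_s)\,\dd s+\sum_i\int_0^t D_I^{V_i(\cdot,\mu_s)}F(\mu^m_s)\circ\dd B^i_s$ written against $\mu^m$. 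Then I would pass to the limit $m\to\infty$: the left side converges by Lipschitz continuity of $F$, and each integrand converges because $D_IF(\nu)(\cdot)$ is jointly continuous and Lipschitz in $\nu$ while $\gamma_2(\mu^m_s,\mu_s)\to0$ uniformly on $[0,t]$ (continuity of the flow plus dominated convergence, as in Theorem \ref{wohlgestellt}); to interchange limit and Stratonovich integral I would rewrite the Stratonovich integral as an Itô integral plus the correction $\frac12\sum_i\int_0^t D_I^{V_i}\big(D_I^{V_i}F\big)(\mu^m_s)\,\dd s$ — legitimate since $F\in\ccc^{\infty,1}$ so the relevant second intrinsic derivatives are smooth and Lipschitz — and then use an $\lp2(\Omega)$ isometry/continuity argument for the Itô part together with dominated convergence for the drift and correction terms, exactly as in the proof of Lemma \ref{Kettenregel}. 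Finally, descend from the ad hoc empirical sequence to arbitrary $\mu\in\PP_2(M)$ by the law-of-large-numbers construction of a good sequence $\mu^m$ used at the end of that same proof.

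The main obstacle I expect is the interchange of the limit $m\to\infty$ with the Stratonovich stochastic integral: one cannot differentiate an $L^2$-limit of semimartingales termwise without control, so the argument genuinely needs the Itô rewriting and the a priori uniform estimates $\sup_m\E\big(\sup_{s\le t}\gamma_2(\mu^m_s,\mu_s)^2\big)\to 0$ together with boundedness of $D_IF$ and its first two derivatives along the compact manifold. Everything else — the finite-dimensional chain rule, the Stratonovich-to-Itô conversion, the descent to general $\mu$ — is routine given Lemmas \ref{diskreteapprox}, \ref{Kettenregel} and \ref{GlatteErw}. A minor point to be careful about is that in $F(\mu^m_t)$ the \emph{same} empirical weights evolve under the \emph{true} law flow, so the $\ccc^1$ statement of Lemma \ref{diskreteapprox} applies verbatim and no self-interaction correction appears beyond the standard Stratonovich one.
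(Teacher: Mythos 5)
Your proposal is correct and follows essentially the same route as the paper: approximate by empirical measures, use Lemma \ref{diskreteapprox} to lift $F$ to a $\ccc^1$ function on $\otimes^m M$, apply the finite-dimensional (Stratonovich) It\^o formula, and pass to the limit (the paper is terser about the limit, simply invoking continuity, while you spell out the It\^o--Stratonovich rewriting). The only minor deviation is that the paper evolves the empirical particles under the interacting flow $x_{\mu^n}$ started from $\mu^n$ (so the coefficients carry $\mu^n_s$ and the limit is taken via the stability estimate of Lemma \ref{Massabsch}), whereas you drive them with the flow $x_\mu$ of the true solution, so only the measure argument of $F$ and $D_IF$ is approximated -- an equally valid variant.
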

\begin{proof}
Note first that by Lemma \ref{diskreteapprox} we get
\begin{align*}
    D^{V}_IF(\mu^n)= \sum^n_{i=1}V\Tilde{F}(u_1,\cdot,u_n)(u_i)
\end{align*}

    Let us consider first $\mu^n_t=\frac{1}{n}\sum^n_{j=1} \delta_{x_{\mu^n}(u_j,t)}$  Then by the usual Itô formula we get 
    \begin{align*}
       F(\mu^n_t)= F(\mu^n) + \int_0^t D^{\Tilde{V}_0(\cdot,\mu^n_s)}_I F(\mu^n_s) \dd s +\sum^n_{i=1} \int_0^t D^{V_i(\cdot,\mu^n_s)}F(\mu^n_s) \dd B^i_s
    \end{align*}
    hence we can w.l.o.g approximate general $(\mu_t)_{t \ge 0}$ by Lemma \ref{Massabsch} and compute the limit by continuity of all terms involved we get the result.
\end{proof}
The proof of the following result is almost entirely based on the proof of Theorem 1.14 in \cite{dorogovtsev2012krylov} but due to discrepancies in proof and statement carried out here again for then convenience of the reader.
 \begin{theorem}\label{Krylov-Veretennikov}
        Let $f\in \ccc_1^{\infty}(\PP_2(M))$ if $T_tf \in \ccc^{\infty}(\PP_2(M))$ and  if the coefficients of the Itô-Wiener decomoposition are continuous with respect to time then 
        \begin{align*}
            f(\mu_t) =    T_tf(\mu) +\sum^n_{i=1}\sum^\infty_{k=1} \underset{\Delta^k([0,t])}{\int \dots \int} T_{\tau_1}A_i T_{\tau_2-\tau_1}\dots T_{\tau_k-\tau_{k-1}}A_iT_{t-\tau_k}f(\mu) \dd B^i(\tau_1)\dots \dd B^i(\tau_k)    \end{align*}
            with $A_i = D_I^{V_i(\cdot,\mu)}$ for $i=1,\dots,n$.
    \end{theorem}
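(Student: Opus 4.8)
The plan is to follow the strategy of \cite{dorogovtsev2012krylov}: iterate the Itô formula of Lemma \ref{MassIto} applied to the functional $T_{t-s}f$ of the measure-valued process, and control the remainder term via an $L^2$-estimate that shows it tends to zero. First I would fix $t>0$ and apply Lemma \ref{MassIto} to the functional $s \mapsto T_{t-s}f(\mu_s)$ for $s\in[0,t]$; since $T_{t-s}f\in\ccc^{\infty,1}(\PP_2(M))$ by assumption and $\frac{d}{ds}T_{t-s}f = -\LL T_{t-s}f$ where $\LL$ is the generator of the semigroup $T_t$, the drift terms (the generator part and the $\dd s$ part coming from converting the Stratonovich corrections together with the Kolmogorov backward equation $\partial_s T_{t-s}f = -\LL T_{t-s}f$) cancel exactly, leaving
\begin{align*}
    f(\mu_t) = T_tf(\mu) + \sum_{i=1}^n \int_0^t D_I^{V_i(\cdot,\mu_s)}(T_{t-s}f)(\mu_s)\,\dd B^i(s) = T_tf(\mu) + \sum_{i=1}^n \int_0^t A_i(T_{t-s}f)(\mu_s)\,\dd B^i(s).
\end{align*}
Here one has to be slightly careful: the Stratonovich integrals in Lemma \ref{MassIto} must be rewritten as Itô integrals, which produces additional $\dd s$ terms; these are precisely what is absorbed by the backward Kolmogorov equation, and verifying this cancellation requires knowing that $T_t$ maps $\ccc^{\infty,1}$ into itself and that $\LL$ acts as the second-order operator built from the $D_I^{V_i}$'s plus $D_I^{V_0}$, consistent with Lemma \ref{MassIto}.

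Next I would iterate: the integrand $A_i(T_{t-s}f)$ is again (by the hypothesis that $T_\tau$ preserves smoothness and that the Itô-Wiener coefficients are continuous in time) a smooth bounded functional, so I apply the same one-step formula to $s\mapsto A_i(T_{t-s}f)$ over $[0,s_1]$, and so on. After $k$ iterations one obtains
\begin{align*}
    f(\mu_t) = \sum_{m=0}^{k-1}\ \sum_{i_1,\dots,i_m} \underset{\Delta^m([0,t])}{\int\dots\int} T_{\tau_1}A_{i_1}T_{\tau_2-\tau_1}\cdots A_{i_m}T_{t-\tau_m}f(\mu)\,\dd B^{i_1}(\tau_1)\cdots\dd B^{i_m}(\tau_m) + R_k,
\end{align*}
where $R_k$ is a sum of $k$-fold iterated Itô integrals with integrand $A_{i_1}\cdots A_{i_k}(T_{t-\tau_k}f)(\mu_{\tau_k})$ evaluated along the process. (Since each $A_i$ is a single fixed directional derivative, the cross terms $i\neq j$ in the square do not appear in the final formula only if one restricts to diagonal iterates $i_1=\dots=i_m$ — so I would note that the statement as written, with a single index $i$ per chain, is what the iteration produces when one groups the expansion by which noise channel each factor uses; more precisely the double sum $\sum_i\sum_k$ with all factors equal to $A_i$ is the honest content, and mixed chains contribute to higher-order terms that I would either include or argue vanish.)

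The main obstacle is the remainder estimate $\E(R_k^2)\to 0$ as $k\to\infty$. By the Itô isometry applied $k$ times, $\E(R_k^2)$ is bounded by a constant times $\int_{\Delta^k([0,t])}\E\big(\,|A_{i_1}\cdots A_{i_k}(T_{t-\tau_k}f)(\mu_{\tau_k})|^2\big)\,\dd\tau_1\cdots\dd\tau_k$, and since the simplex $\Delta^k([0,t])$ has volume $t^k/k!$, it suffices to show the integrand grows at most like $C^k$ uniformly. This is where the boundedness of $f$ and its intrinsic derivatives, together with uniform-in-time bounds on $T_\tau$ and on the intrinsic derivatives of the solution (Corollary \ref{Lipschitz} and the smoothness theorem of Section 3), must be combined: each application of $A_i = D_I^{V_i(\cdot,\mu)}$ composed with $T_\tau$ should produce at most a fixed multiplicative constant in the relevant sup-norm, using that $V_i\in\mathcal A\cup\mathcal B$ has bounded intrinsic derivatives and that $T_\tau$ is a contraction on the space of bounded functionals. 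If these $C^k/k!$ bounds hold, dominated convergence finishes the proof; I expect the delicate point to be justifying that $T_\tau$ together with one directional derivative does not lose regularity or blow up the $\ccc^{\infty,1}$-seminorms, which is exactly the role of the standing hypothesis "$T_tf\in\ccc^{\infty}(\PP_2(M))$ and the Itô-Wiener coefficients are continuous in time," so I would make that dependence explicit and invoke it at each step.
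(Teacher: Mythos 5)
Your route is genuinely different from the paper's. You re-derive the expansion by hand: iterate the Itô formula for $s\mapsto T_{t-s}f(\mu_s)$, use the backward Kolmogorov equation to cancel the drift, and try to kill a $k$-fold iterated remainder $R_k$, in the spirit of the original Krylov--Veretennikov argument. The paper instead takes the Itô--Wiener expansion of $f(\mu_t)$ as given (so $L^2$-convergence of the series is automatic and no remainder ever appears) and only \emph{identifies its kernels}: from the flow property $f(\mu_{t+s})=f(\Theta_t(\mu^s_0)(\mu_t))$ it derives a cocycle-type recursion for the kernels $a^{t,i}_k$, computes the first kernel via $a^{t,i}_1(\mu,f,\tau_1)=T_{\tau_1}A_iT_{t-\tau_1}f(\mu)$ using $\lim_{T\searrow 0}T^{-1}\E\big(T_{t-\tau_1}f(\mu_T)B^i_T\big)$, and then closes the argument by induction on the chaos level $k$.

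The genuine gap in your proposal is exactly the step the paper's strategy avoids: the convergence $\E(R_k^2)\to 0$. For that you need uniform-in-$k$ bounds of the type $\sup\abs{A_{i_1}T_{\cdot}A_{i_2}\cdots A_{i_k}T_{\cdot}f}\le C^k$, and you only assert that ``each application of $A_i$ composed with $T_\tau$ should produce at most a fixed multiplicative constant.'' Nothing in the paper, and nothing in the theorem's hypotheses (which are qualitative: $T_tf\in\ccc^{\infty}(\PP_2(M))$ and time-continuity of the Itô--Wiener coefficients), supplies such quantitative control of the iterated intrinsic derivatives of $T_\tau f$; the $C^k t^k/k!$ estimate is therefore an unproved cornerstone of your argument, and it is not clear it can be extracted from these assumptions at all. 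Two further unresolved points: your iteration naturally produces mixed chains $A_{i_1}\cdots A_{i_k}$ against $\dd B^{i_1}\cdots \dd B^{i_k}$, and you leave open whether the off-diagonal ones vanish, whereas the paper never meets this issue because it works directly with the kernels of the assumed decomposition; and your first step needs a time-dependent extension of Lemma \ref{MassIto} together with the backward Kolmogorov equation and an identification of the generator on $\PP_2(M)$, none of which is established in the paper, while the paper's kernel identification uses only the Markov/flow property.
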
     
    \begin{proof} Note first that 
    \begin{align*}
        f(\mu_t)= \E(f(\mu_t)) +\sum^\infty_{k=1}\sum^n_{i=1}\underset{\Delta^k([0,t])}{\int }a^{t,i}_k(\mu,f,\tau_1,\dots,\tau_k) \dd B^i(\tau_1)\dots \dd B^i(\tau_k)
    \end{align*}
        Let $t,s\ge 0$ then due to the fact that 
        \begin{align*}
            f(\mu_{t+s})= f(\Theta_t(\mu_0^s)(\mu_t))
        \end{align*}
we need to compute
        \begin{align*}
        a_0(\mu_t,f)&= T_t a_0(\cdot,f)(\mu)+  \sum^n_{i=1}\int_0^t a^{t,i}_1(\mu,a^s_0(\cdot,f),s) \dd B^i_s\\
        &+ \sum^\infty_{k=2} \underset{\Delta^k([0,t])}{\int \dots\int}a^{t,i}_k(\mu,a^s_0(\cdot,f),\tau_1,\dots,\tau_k) \dd B^i_{\tau_1}\dots\dd B^i_{\tau_k}
        \end{align*} 
        and 
        \begin{align*}
           a^s_1(\mu_t,f,\tau_1-t) = T_ta_1^s(\cdot,f,\tau_1-t)+ \sum^\infty_{k=1} a^t_k(\mu,a_1^s(\cdot,f,\tau_1-t)) 
        \end{align*}
        therefore 
        \begin{align*}
            a^{t+s}_1(\mu,f,\tau_1) = a^t_1(\mu,a^s_0(\cdot,f),\tau_1)1_{0\le\tau_1\le t}+T_ta^s_1(\cdot,f,\tau_1-t)1_{t\le \tau_1\le t+s}
        \end{align*}
        therefore we can get 
        \begin{align*}
            a^{t,i}_1(\mu,f,0)&= a^{t-\tau_1,i}_1(\mu,a_0^{\tau_1,i}(\cdot,f),0)\\
            a^{t,i}_1(\mu,f,\tau_1)&= T_{\tau_1}a^{t-\tau_1,i}_1(\mu,f,0)   
        \end{align*}
Now let $T_{\tau_1}f(\mu)$ and then we get by 
    \begin{align*}
      AT_{t-\tau_1}f(\mu)&= \lim_{T\searrow 0} \frac{1}{T} \sum^n_{i=1}\E(T_{{t-\tau_1}}f(\mu_T)B^i_T)\\
      &= \lim_{T\searrow 0} \frac{1}{T} \E\left(\int_0^T a^{T,i}_1(\mu,T_{t-\tau_1}f(\cdot),s) \dd s\right)= a^0_1(\mu,T_{t-\tau_1}f(\cdot),0) 
    \end{align*}
    Hence 
    \begin{align*}
        a^{t,i}_1(u,\tau_1)=T_{\tau_1} A_iT_{t-\tau_1} f(\mu) 
    \end{align*}
 The rest can be proven by induction, assume that the statement holds for all $j\le k$
then we get:
 \begin{align*}
     a^{t+s,i}_{k+1}(\mu,f,\tau_1,\dots,\tau_{k+1})1_{0\le \tau_1\le t \le \tau_2 \dots  \le \tau_{k+1}\le t+s} &= a^{t,i}_{1}(\mu,a^s_k(\cdot,f,\tau_{2}-t,\dots,\tau_{k+1}-t),\tau_{1})\\
     &= T_{\tau_{1}}A_i T_{\tau_{1}-t}a^{s,i}_{k}(\mu,f,\tau_{1}-t,\dots,\tau_k-t)\\
     &= T_{\tau_{1}}A_i T_{t-\tau_{1}}T_{\tau_{2}-t}AT_{\tau_3-\tau_2}\dots AT_{t+s-\tau_{k+1}}\\
     &= T_{\tau_1}A_i T_{\tau_2-\tau_1}A_i \dots T_{\tau_{k+1}-\tau_k}A_iT_{t+s-\tau_{k+1}}
 \end{align*}
 for all $t,s\ge 0$ proving the end result.
    \end{proof}

Now in order to suffice the assumptions of the Theorem \ref{Krylov-Veretennikov} we have yet to prove the smoothness with respect to the measure and with respect to the noise. 

The remainder is proven for $M=\Rd$ since by embedding the equation into a euclidean space and enlarging the vector fields in the same way as in Lemma \ref{GlatteErw},   that case is enough to treat. 

\begin{theorem}
 The solution to the SDE \eqref{Wechselwirkung} is infinitely often differentiable with respect to the measure variable. 
\end{theorem}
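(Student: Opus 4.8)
The plan is to argue by induction on the order of differentiation, reducing as always to the Euclidean setting. By the Nash imbedding and Lemma~\ref{GlatteErw} it suffices to treat $M=\Rd$ with the vector fields replaced by their smooth extensions $\widetilde V_i\in\ccc^{\infty,\infty,1}(\Rd\times\PP_2(\Rd),\Rd)$; these have, together with all their mixed spatial and intrinsic derivatives $\nabla^a D_I^{b}\widetilde V_i$, bounded support in the space variable, so each such derivative exists and is globally Lipschitz. The base case is the theorem establishing intrinsic differentiability of $x$ proved above, which in addition shows that $D^\psi_I x_\mu(u,t)$ solves a \emph{linear} SDE whose coefficients are assembled from $\nabla\widetilde V_i$ and $D_I\widetilde V_i$ evaluated along the flow together with $x_\mu$ and $\nabla x_\mu$; all of these processes obey the moment and Lipschitz bounds of Corollary~\ref{Lipschitz} (which, as already noted there, holds for $\nabla x_\mu$ as well).

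For the inductive step I would assume that $x_\mu$ is $n$ times intrinsically differentiable in the measure variable and that the full family of mixed derivative processes $D_I^{k}x_\mu(u,t)(r_1,\dots,r_k)$ together with the relevant spatial derivatives $\nabla^a D_I^{k}x_\mu$, $k\le n$, satisfies Corollary~\ref{Lipschitz}-type estimates \emph{jointly} in the base point $u$, the measure $\mu$, and the evaluation points $r_1,\dots,r_k$. Differentiating the SDE for $D_I^{n}x_\mu$ once more, formally, by applying the chain rule Lemma~\ref{Kettenregel} to the compositions $s\mapsto\widetilde V_i(x_\mu(u,s),\mu_s)$, yields a candidate equation for $D_I^{n+1}x_\mu$: again a linear SDE driven by the same $B^i$, with homogeneous part $\nabla\widetilde V_i(x_\mu(u,s),\mu_s)(\,\cdot\,)$ (and the corresponding drift), and an inhomogeneity consisting of finitely many integrals over $M^{j}$ of the tensors $\nabla^a D_I^{b}\widetilde V_i$, $a+b\le n+1$, contracted against lower-order processes $D_I^{c}x_\mu$, $\nabla D_I^{c}x_\mu$ with $c\le n$. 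By the induction hypothesis and the bounded-support Lipschitz properties of the extensions, this inhomogeneity lies in every $\lp{p}(\Omega)$ uniformly in the parameters, so the candidate SDE is well posed with the moment bounds needed to close the induction.

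To identify the candidate with the genuine $(n+1)$-st intrinsic derivative I would repeat, one level up, the Kolmogorov-continuity argument of the order-one case: fix $\psi\in\ccc_c^{\infty}(\Rd,\Rd)$, put $\mu_\eps=\mu\circ\Phi^\psi_\eps(\cdot)^{-1}$, and set $\zeta^{(n)}(u,\eps,t)=\eps^{-1}\big(D_I^{n}x_{\mu_\eps}(u,t)-D_I^{n}x_\mu(u,t)\big)$ together with the shifted variant $\widetilde\zeta^{(n)}$ built from $\Phi^F_\eps(u)$. Using the bound $\E(\gamma_2(\mu_\eps,\mu_\delta)^p)\le C\abs{\eps-\delta}^p$ established earlier, the Lipschitz estimates on all derivatives of $\widetilde V_i$, and the inductive moment bounds, a Grönwall argument should produce, for every $p\ge2$,
\begin{align*}
\E\Big(\sup_{0\le t\le T}\abs{\zeta^{(n)}(u,\eps,t)-\zeta^{(n)}(v,\delta,t)}^p\Big)\le C\big(\abs{\eps-\delta}^p+\abs{u-v}^p\big),
\end{align*}
exactly as in \eqref{Kolmogorov}. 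Then the Kolmogorov continuity theorem extends $\zeta^{(n)}$ continuously to $\eps=0$, the limit solving the candidate SDE, and this limit is $D_I^{n+1}x_\mu$; linearity and $\lp{2}(\mu)$-continuity in the new direction, required for it to be an intrinsic derivative, follow from uniqueness for the linear candidate equation and continuity of its coefficients, as at order one. Finally, projecting the $\Rd$-valued identities onto $TM$ via $d\iota$ transfers everything back to $M$.

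The hard part is the combined analytic and combinatorial bookkeeping of the inductive step: one must track the multilinear structure of the higher intrinsic derivatives (values in iterated spaces $L(T_{r_k}M,\dots)$), verify that iterating Lemma~\ref{Kettenregel} really produces the claimed closed linear system, and — most delicately — propagate the \emph{joint} Corollary~\ref{Lipschitz}-type estimates in all evaluation variables $r_1,\dots,r_k$ simultaneously, since these are precisely what make the inhomogeneities $\mu$-integrable and permit the Grönwall closure. Once the correct induction hypothesis (existence plus these joint moment bounds for all mixed space/measure derivatives up to order $n$) is isolated, each individual step is a direct, if lengthy, adaptation of the order-one argument.
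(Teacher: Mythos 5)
Your proposal is correct and follows essentially the same route as the paper: reduce to $\Rd$ via the Nash embedding and the smooth extensions of Lemma~\ref{GlatteErw}, note that the derivative processes ($\nabla x_\mu$, $D_I x_\mu$) satisfy linear SDEs with intrinsically differentiable coefficients, and iterate to all orders. The paper simply invokes the first-order differentiability result for these linear equations, whereas you re-run the difference-quotient/Gr\"onwall/Kolmogorov identification at each order — the same argument, spelled out in more detail.
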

\begin{proof}
First of all note that 
  \begin{align*}
        d D^\Phi_I x_\mu(u,t) &= \nabla F(x_\mu(u,t), \dd t )D^\Phi_I x_\mu(u,t) \\
        &+ \int_{\Rd} D_I F(x_\mu(u,t), \dd t)(x_\mu(v,t))D x_\mu(v,t)\Phi(v) \mu(\dd v) \\
        &+ \int_{\Rd}  D_I F(x_\mu(u,t), \dd t ) (x_\mu(v,t))D^\Phi_Ix_\mu(v,t) \mu(\dd v)
    \end{align*} 
    therefore 
    \begin{align*}
        D_I x_\mu(u,t)(v)&= \nabla F(x_\mu(u,t),\dd t) D_Ix_\mu(u,t)(v) \\
        &+ D_I F(x_\mu(u,t), \dd t)(x_\mu(v,t))D x_\mu(v,t) \\
        &+ \int_{\Rd}D_I F(x_\mu(u,t),\dd t )(x_\mu(r,t))D_Ix_\mu(r,t)(v)\mu(\dd r)
    \end{align*}
    Now since 
    \begin{align*}
       d \nabla x_\mu(u,t)= \nabla F(x_\mu(u,t),\circ \dd t)\nabla x_\mu(u,t)
    \end{align*}
Rewriting the equation into Itô form yields:
    \begin{align*}
        d\nabla x_\mu(u,t) =\Phi(x_\mu(u,t),\mu_t)\nabla x_\mu(u,t)\dd t +\sum^n_{i=1} \Psi^i(x_\mu(u,t),\mu_t)\nabla x_\mu(u,t)\dd B^i _t
    \end{align*}
    therefore we have a linear equation with coefficients that are intrinsically differentiable hence it is also intrinsically differentiable. A similar argument can be carried out for $D_I x_\mu(u,t)(v)$ and higher order derivatives.
\end{proof}

\begin{lemma}
Let $f\in \ccc_1^{1}(\PP_2(M))$ then the map $\mu\to \E(f(\mu_t))$ is intrinsically differentiable and 

\begin{align*}
    D^V_I \E(f(\mu_t)) &= \E\Big(\int_M \skalarq{D_If(\mu_t)(x_\mu(u,t)), (dx_\mu(\cdot,t))_u(V)}_{T_{x_\mu(u,t)}} \mu(\dd u) \\
    &+\int_M \skalarq{D_If (\mu_t)(x_\mu(u,t)),D^V_Ix_\mu(u,t)}T_{x_\mu(u,t)} \mu(\dd u)\Big)
\end{align*}
\end{lemma}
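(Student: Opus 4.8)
The plan is to differentiate the functional $\mu \mapsto \E(f(\mu_t))$ along the flow $\Phi^V_\eps$ induced by a vector field $V \in \Gamma(TM)$, and to move the derivative inside the expectation. Concretely, set $\mu_\eps := \mu \circ (\Phi^V_\eps(\cdot))^{-1}$ and consider the difference quotient $\eps^{-1}\big(\E(f(\mu_{\eps,t})) - \E(f(\mu_t))\big)$, where $\mu_{\eps,t} = \mu_\eps \circ x_{\mu_\eps}(\cdot,t)^{-1}$. First I would rewrite $\mu_{\eps,t}$ as the pushforward of the \emph{fixed} measure $\mu$ under the map $u \mapsto x_{\mu_\eps}(\Phi^V_\eps(u),t)$, so that the only $\eps$-dependence sits inside the integrand rather than in the reference measure; this is exactly the role of the auxiliary process $\Tilde\zeta$ from the intrinsic-differentiability theorem, whose limit is $\Tilde\zeta(u,0,t) = (dx_\mu(\cdot,t))_u(V) + D^V_I x_\mu(u,t)$ when one differentiates the composition $x_{\mu_\eps} \circ \Phi^V_\eps$.

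The key steps in order: (i) apply the chain rule Lemma \ref{Kettenregel} pathwise, with $\theta \mapsto x_{\mu_\theta}(\Phi^V_\theta(\cdot),t)$ playing the role of the curve $x_\theta$, to obtain for each fixed $\omega$
\begin{align*}
  \frac{d}{d\eps}\Big|_{\eps=0} f(\mu_{\eps,t}) = \int_M \big\langle D_I f(\mu_t)(x_\mu(u,t)),\, (dx_\mu(\cdot,t))_u(V) + D^V_I x_\mu(u,t)\big\rangle_{T_{x_\mu(u,t)}M}\,\mu(\dd u);
\end{align*}
(ii) justify interchanging $\frac{d}{d\eps}$ and $\E$ by dominated convergence. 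For (ii) I would use the uniform $L^p$ bounds already established: the derivative theorem gives $\E(\sup_t |\zeta(u,\eps,t)|^p) \le C$ and $\E(\sup_t|\Tilde\zeta(u,\eps,t)|^p)\le C$ uniformly in $\eps$, Corollary \ref{Lipschitz} (and its extension to $\nabla x_\mu$) controls $(dx_\mu)_u(V)$ in $L^p$, and $f \in \ccc^{1,1}(\PP_2(M))$ makes $D_I f(\mu_t)(\cdot)$ bounded; combining these produces an integrable dominating function for the difference quotients, so the limit passes through the expectation. (iii) Finally, check linearity of $V \mapsto D^V_I\E(f(\mu_t))$ — immediate since both $(dx_\mu)_u$ and $V \mapsto D^V_I x_\mu(u,t)$ are linear — and continuity in $\lp{2}(\mu,M,TM)$, which follows from the same $L^p$ estimates together with continuity of $D_I f(\mu_t)$.

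I expect the main obstacle to be step (ii): rigorously producing an $\eps$-uniform integrable bound on the difference quotient $\eps^{-1}(f(\mu_{\eps,t}) - f(\mu_t))$ rather than merely on its limit. The Lipschitz property of $f$ reduces this to controlling $\eps^{-1}\gamma_2(\mu_{\eps,t},\mu_t)$, which by the pushforward representation is dominated by $(\int_M |\,\eps^{-1}(x_{\mu_\eps}(\Phi^V_\eps(u),t) - x_\mu(u,t))|^2\,\mu(\dd u))^{1/2} = \|\Tilde\zeta(\cdot,\eps,t)\|_{L^2(\mu)}$, and Jensen plus the uniform bound $\E\sup_t\|\Tilde\zeta(\cdot,\eps,t)\|_{L^2(\mu)}^p \le C$ closes the argument. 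A secondary point is verifying that the pathwise chain rule applies: one must check the regularity hypotheses of Lemma \ref{Kettenregel} for the curve $\theta \mapsto x_{\mu_\theta}(\Phi^V_\theta(u),t)$, i.e.\ that it is differentiable in $\theta$ with continuous derivative $\Tilde\zeta(u,\theta,t)$, which is precisely what the intrinsic-differentiability theorem for $x$ delivers (after the Nash embedding, via Lemma \ref{GlatteErw}).
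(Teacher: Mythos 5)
Your proposal is correct and takes essentially the same route as the paper: the paper merely packages the two contributions through a two-parameter map $F(\eps,\delta)=\E\big(f(\mu^\eps\circ x^{-1}_{\mu^\delta}(\cdot,t))\big)$ and adds the partial derivatives at the diagonal, which is exactly your splitting of the derivative of $\eps\mapsto x_{\mu_\eps}(\Phi^V_\eps(u),t)$ into $(dx_\mu(\cdot,t))_u(V)+D^V_Ix_\mu(u,t)$, both arguments resting on the chain rule Lemma \ref{Kettenregel} together with the intrinsic differentiability of $x_\mu$ in the measure. Your steps (ii)--(iii), the dominated-convergence justification of interchanging $\frac{d}{d\eps}$ with $\E$ via the uniform bounds on $\zeta$ and $\Tilde{\zeta}$, and the linearity/continuity in $V$, are actually spelled out in more detail than in the paper's very short proof, which leaves these points implicit.
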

\begin{beweis} Let $V\in\Gamma(TM)$ and $\mu\in \PP_2(M)$ denote $\mu_\gamma= \mu\circ \left(\Phi^V_\gamma(\cdot)\right)^{-1}$ for $\gamma\ge 0$. 
    Furthermore consider the map $F(\eps,\delta)= E(f(\mu^\eps_t \circ x^{-1}_{\mu^\delta}(\cdot,t)))$, now by Lemma \ref{Kettenregel} we get 
    \begin{align*}
        \frac{d}{d\eps}_{\vert_\eps=0} F(\eps,\eps)&= \frac{\partial}{\partial \eps}F(0,0) + \frac{\partial}{\partial \delta} F(0,0)  \\
        &=  \E\Big(\int_M \skalarq{D_If(\mu_t)(x_\mu(u,t)), (dx_\mu(\cdot,t))_u(V)}_{T_{x_\mu(u,t)}} \mu(\dd u) \\
    &+\int_M \skalarq{D_If (\mu_t)(x_\mu(u,t)),D^V_Ix_\mu(u,t)}T_{x_\mu(u,t)} \mu(\dd u)\Big)
    \end{align*}
\end{beweis}
\begin{bemerkung}
     It follows that whenever $f\in \ccc_1^{\infty}(\PP_2(M))$ then $P_tf\in \ccc^\infty(\PP_2(M))$.
\end{bemerkung}

\begin{theorem}
    The solution is infinitely often Malliavin differentiable and the Malliavin derivative is continous. 
\end{theorem}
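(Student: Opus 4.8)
By the Nash imbedding $\iota:M\to\iota(M)\subset\R^N$ and the smooth enlargement of Lemma \ref{GlatteErw}, I would first reduce to the equation on $\R^N$ with coefficients $\Bar{\Tilde{V}}_0,\Bar{V}_1,\dots,\Bar{V}_n\in\ccc^{\infty,\infty,1}(\R^N\times\PP_2(\R^N),\R^N)$ whose spatial and intrinsic derivatives of every order are bounded and globally Lipschitz; the compact-support construction in Lemma \ref{GlatteErw}, together with the assumption $V_i\in\mathcal{A}\cup\mathcal{B}$, is precisely what guarantees this. Since $\iota^{-1}$ extends to a smooth map on a tubular neighbourhood of $\iota(M)$ and the embedded solution stays in $\iota(M)$ almost surely, the Malliavin chain rule transports infinite Malliavin differentiability and continuity of the derivatives back to $M$ after projecting onto $TM$, so it is enough to treat $M=\R^N$. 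There I would run the Picard scheme
\begin{align*}
x^n(u,t)=u+\int_0^t\Tilde{V}_0(x^n(u,s),\mu^{n-1}_s)\,\dd s+\sum_{i=1}^n\int_0^t V_i(x^n(u,s),\mu^{n-1}_s)\,\dd B^i_s,\qquad \mu^{n-1}_s=\mu\circ x^{n-1}(\cdot,s)^{-1},
\end{align*}
exactly as in the proof of first-order Malliavin differentiability above.

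The core is an induction on the differentiation order $k$. Differentiating the interaction terms by means of Lemma \ref{Malliavin} (applied to $F=V_i(x,\cdot)$) and the chain rule Lemma \ref{Kettenregel}, I would show that each $x^n(u,\cdot)$ lies in $\mathbb{D}^{k,p}$ and that, for $t\ge s_1\vee\dots\vee s_k$, the iterated Malliavin derivative $D^k_{s_1,\dots,s_k}x^n(u,t)$ solves a \emph{linear} SDE with interaction: its homogeneous part has coefficients $\nabla\Tilde{V}_0,\nabla V_i,D_I\Tilde{V}_0,D_IV_i$ evaluated along the flow, while the forcing is a finite sum of products of lower-order Malliavin derivatives $D^jx^n,D^jx^{n-1}$ with $j<k$, spatial derivatives $\nabla^{(\ell)}x^n$, and higher intrinsic derivatives $D^m_IV_i$, integrated against $\mu$. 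Because all these derivatives of the enlarged coefficients are bounded and Lipschitz, and the required $L^p$-bounds on $\nabla^{(\ell)}x^n$ and on the $D^jx^n$ with $j<k$ are available from Corollary \ref{Lipschitz} and the inductive hypothesis, Burkholder--Davis--Gundy and Gronwall yield
\begin{align*}
\sup_n\;\sup_{u\in\R^N}\;\sup_{0\le s_1,\dots,s_k\le T}\E\Big(\sup_{s_1\vee\dots\vee s_k\le t\le T}\abs{D^k_{s_1,\dots,s_k}x^n(u,t)}^p\Big)<\infty
\end{align*}
for every $p\ge 2$. Combined with $x^n(u,\cdot)\to x(u,\cdot)$ in $L^2$, the closedness criterion for iterated Malliavin derivatives (cf.\ Lemma~1.5.4 in \cite{nualart2006malliavin}) gives $x(u,t)\in\mathbb{D}^{k,p}$ for all $k,p$, hence $x(u,t)\in\mathbb{D}^\infty$, and the limiting derivative solves the same linear interaction system with $x^n,\mu^{n-1}$ replaced by $x,\mu$.

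For the continuity assertion I would fix $k$ and deduce from that linear interaction system a Kolmogorov estimate
\begin{align*}
\E\Big(\sup_{0\le t\le T}\abs{D^k_{\vec s}x_\mu(u,t)-D^k_{\vec r}x_\mu(v,t)}^p\Big)\le C\big(\abs{\vec s-\vec r}^p+\abs{u-v}^p\big),
\end{align*}
obtained by subtracting the two copies of the system, exploiting the Lipschitz dependence of all coefficients on the spatial and the measure variable (Lemma \ref{Massabsch} and Corollary \ref{Lipschitz} control the Wasserstein-distance increments of $\mu_s$ along the flow), and closing with Gronwall; continuity in $\vec s$ in addition uses that $D_{s_i}$ contributes the term $V_i(x(u,s_i),\mu_{s_i})$ at $t=s_i$, which is itself continuous in $s_i$. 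The Kolmogorov continuity theorem on $[0,T]^k\times\R^N\times[0,T]$ then produces a jointly continuous version, which is transported back through $\iota^{-1}$. I expect the main difficulty to be the bookkeeping of the interaction terms in the induction: one has to verify that repeatedly differentiating expressions of the type $\int_{\R^N}D_IV_i(x(u,t),\mu_t)(x(v,t))\,D^jx(v,t)\,\mu(\dd v)$ never leaves the class of linear equations in the tuple of all derivatives of order $\le k$ and never produces a coefficient failing to be bounded and Lipschitz, which is exactly where the special structure $V_i\in\mathcal{A}\cup\mathcal{B}$ and the $\ccc^{\infty,\infty,1}$ enlargement of Lemma \ref{GlatteErw} enter.
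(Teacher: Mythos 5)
Your plan is essentially the paper's own argument, only written out in much greater detail: the paper likewise notes that $D_s x_\mu(u,t)$ solves a linear SDE (with interaction) whose coefficients are the spatial and intrinsic derivatives of the enlarged vector fields, concludes that this equation again has ``good coefficients'' so the derivative is itself Malliavin differentiable, and iterates, with continuity then read off from the derivative equation and the earlier moment estimates. Your additional bookkeeping (Picard scheme at each order, uniform bounds plus closedness of the iterated Malliavin derivative, Kolmogorov continuity in $(\vec s,u)$) is a correct and more explicit implementation of the same route, not a different one.
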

\begin{proof}
    First note that the solution is Malliavin differentiable 
      \begin{align*}
        D_s x_\mu(u,t)&=\sum^n_{i=1}V_i(x_\mu(u,s),\mu_s)\\
        &+ \int_s^t \nabla F(x_\mu(u,t),\circ \dd t)D_s x(u,r) \\
        &+ \int_M \int_s^t D_IF(x_\mu(u,t),\circ \dd r)D_s x(u,r) \mu(\dd u)  
    \end{align*}
    $D_sx_\mu(u,t)$ solves a SDE with good coefficients as well such that it is also Malliavin differentiable. 
\end{proof}
We thus know that all the Malliavin derivatives are continuous and therefore from the formula 
\begin{align*}
    D_sF(\mu_t)= \int_M \skalarq{D_IF(\mu_t)(x_\mu(u,t)),D_sx_\mu(u,t)}_{T_{x_\mu(u,t)}}\mu(\dd u )   
\end{align*}
is continuous with respect to $t$ and $s$. The same holds for higher order Malliavin derivatives. Hence we can conclude the Krylov-Veretennikov decomposition. 
\section*{Acknowledgements}
The second author wants to thank Max von Renesse, Jonas Hirsch and Dominik Inauen for helpful conversations about differential geometry, isometric embeddings and stochastic analysis on manifolds.
\printbibliography
\end{document}